\numberwithin{theorem}{section}
\newcommand{\TheTitle}{Anderson Acceleration for Nonsmooth Fixed Point Problems}
\newcommand{\TheAuthors}{}
\title{{\TheTitle}\thanks{Submitted to the editors XX.}
%\funding{}
}
\author{
  Wei Bian\thanks{Department of Mathematics, Harbin Institute of Technology, Harbin, China (\email{bianweilvse520@163.com}). The work of this author was partially supported by the NSF
foundation (11871178,61773136) of China .}
  \and
  Xiaojun Chen\thanks{Department of
Applied Mathematics, The Hong Kong Polytechnic University, Hong Kong, China (\email{maxjchen@polyu.edu.hk}). The work of this author
was partially supported by Hong Kong Research Grant Council grant
(15300210) }
}
\newtheorem{remark}{\bf Remark}[section]
\newtheorem{example}{\bf Example}[section]
\newtheorem{assumption}{Assumption}[section]
\begin{document}

\maketitle

% REQUIRED
\begin{abstract}
We give new convergence results of Anderson acceleration for the composite $\max$ fixed
point problem. We prove that Anderson(1) and EDIIS(1) are q-linear convergent with a smaller
q-factor
than existing q-factors.
Moreover, we propose a smoothing approximation of the composite max function in the contractive fixed point problem.
We show that the smoothing approximation is a contraction mapping with
the same fixed point as the composite $\max$ fixed point
problem. Our results rigorously confirm  that the nonsmoothness does not affect the convergence rate
of Anderson acceleration method when we use the proposed smoothing approximation for
the composite $\max$ fixed
point problem.
Numerical results for constrained minimax problems, complementarity problems and nonsmooth
differential equations are presented to show the efficiency and good performance of the proposed Anderson acceleration method with smoothing approximation. \end{abstract}

% REQUIRED
\begin{keywords}
Anderson acceleration, smoothing approximation, composite max function, minimax problem,
complementarity problem
\end{keywords}
% REQUIRED
\begin{AMS}
65H10, 68W25
\end{AMS}

\section{Introduction}
In this paper, we focus on the convergence analysis of Anderson(m) and EDIIS(m) for the following composite max fixed point problem
\begin{equation}\label{eq1-0}
u=G(u):=H(P_{\Omega}(Q(u))),
\end{equation}
where $H:\mathbb{R}^l\rightarrow\mathbb{R}^n$ and $Q:\mathbb{R}^n\rightarrow \mathbb{R}^l$ are
Lipschitz continuously differentiable functions, $\Omega$ is a box subset of $\mathbb{R}^l$, and $P_{\Omega}$ is the projection on $\Omega$. Problem (\ref{eq1-0}) arises from many applications in engineering and finance including
minimax problems, complementarity problems, nonsmooth integral equations and nonsmooth
differential equations.

Anderson acceleration was originally introduced in the context of integral equations by Anderson in 1965 \cite{Anderson}. It is a class of methods for solving the fixed point problem
$u=G(u)$, where $G$ is a continuous function from $D\subseteq \mathbb{R}^n$ to $D$,
and uses a history of search directions to improve the convergence rate of
the fixed point method
\begin{equation}\label{Picard}
u_{k+1}=G(u_k).
\end{equation}
Anderson acceleration method has been widely used in electronic structure computation \cite{Anderson,ChenKelley2015,Fang,Schneider,
TothKelley2015,Walker}, chemistry and physics \cite{An,Stasiak}, and specific optimization problems \cite{NN,Walker}. In particular, Anderson acceleration is designed to solve the fixed point problem when computing
the Jacobian of $G$ is impossible or too costly.
Anderson acceleration is also known as the Pilay mixing \cite{Pulay}, DIIS (direct inversion on iterative subspace) \cite{Kudin,Lin,Stasiak}, nonlinear GMRES method \cite{Carlson,Miller,Washio}, and interface quasi-Newton \cite{Eyert,Haelterman}.
A formal description of Anderson acceleration is
presented in Algorithm \ref{algo-o} and often called Anderson(m).
\begin{algorithm}
\caption{Anderson(m)}\label{algo-o}
Choose $u_0\in D$ and a positive integer $m$. Set $u_1=G(u_0)$ and $F_0=G(u_0)-u_0$. \\
\textbf{for $k=1,2,...$ do}

\qquad  set $F_k=G(u_k)-u_k$;

\qquad choose $m_k=\min\{m,k\}$;

\qquad solve
\begin{equation}\label{Anderson_alpha}
\min\,\,\left\|\sum\nolimits_{j=0}^{m_k}\alpha_j{F}_{k-m_k+j}\right\|\quad \quad
\mbox{s.t.}\,\sum\nolimits_{j=0}^{m_k}\alpha_j=1
\end{equation}
\qquad to find a solution $\{\alpha_j^k:j=0,\ldots,m_k\}$, and set
\begin{equation}\label{u_k+1}
u_{k+1}=\sum\nolimits_{j=0}^{m_k}\alpha_j^k{G}(u_{k-m_k+j}).
\end{equation}
\textbf{end for}
\end{algorithm}

Anderson(m) maintains a history of function values of $G(\cdot)$ at $u_{k-m_k+j}$, $j=0,\ldots, m_k$,
where $m_k$ is an algorithmic parameter that indicates the depth of the accelerated Anderson iterations.
Using these function values, Anderson(m) defines a new iterate by a linear combination of the last {$m_k+1$} iterates, where the coefficients of the linear combination are computed at each iteration by the convex optimization problem in \eqref{Anderson_alpha}.
When $m=0$, Anderson acceleration is the fixed point method in (\ref{Picard}),
which is also known as the Picard method. In practice, each $m_k$ may be different to maintain the acceptable conditioning of $(F_{k-m_k+j})_{j=0}^{m_k}$ \cite{Walker} and can be dynamically updated to improve the performance \cite{Pollock2}. The optimization problem \eqref{Anderson_alpha} in Anderson(m) does not specify the norm in its general form and using different norms will not affect the convergence \cite{TothKelley2015}. Throughout this paper, we consider problem \eqref{Anderson_alpha} in the sense of Euclidean norm.
Notice that the description of Anderson(m) in Algorithm \ref{algo-o} is convenient for analysis, but the readers may refer to \cite{TothKelley2015,Walker} and references therein for its efficient implementation.

The EDIIS(m) \cite{Kudin} differs from Anderson(m) by adding nonnegativity  constraints in (\ref{Anderson_alpha}),
that is, replacing (\ref{Anderson_alpha}) by the following minimization problem
$$
\min\,\,\left\|\sum_{j=0}^{m_k}\alpha_j{F}_{k-m_k+j}\right\|\quad
\mbox{s.t.}\,\sum_{j=0}^{m_k}\alpha_j=1, \quad \alpha_j\ge 0,\,  j=0,\ldots, m_k.
$$

Suppose $G:D\rightarrow D$ is a contraction mapping with factor
$c\in(0,1)$ in the Euclidean norm $\|\cdot\|$ on a closed set $D\subset\mathbb{R}^n$, that is,
$$
\|G(u)-G(v)\|\le c \|u-v\|, \; \forall \, u,v \in D.$$
By the contraction mapping theorem \cite{Ortega}, $G$ has
 a unique fixed point $u^*\in D$, which is the unique solution of the system of nonlinear equations
$$F(u):=G(u)-u=0.$$
Without loss of generality, we assume that there is $\mathcal{B}(\delta,u^*):=\{u \in \mathbb{R}^n :  \, \|u-u^*\| \le \delta\}\subset D$ with $\delta>0$.
For a contraction mapping $G$, it is known that the fixed-point method in \eqref{Picard} has q-linear convergence rate, that is
$
\|u_{k+1}-u^*\|\le c\|u_k-u^*\|
$
holds in ${\cal B}(\delta,u^*)$. However, the theoretical convergence analysis of Anderson(m) had not been proved for a long time after it being brought forward and widely used. The first mathematical convergence result for Anderson(m) was given by Toth and Kelley in 2015 \cite{TothKelley2015}.
Under the assumption that $G$ is Lipschitz continuously differentiable in $D$,
Toth and Kelley \cite{TothKelley2015} showed the r-linear convergence of Anderson(m) with r-factor
$\hat{c}\in (c,1)$ as follows,
%\begin{equation}\label{TK1}
$$\|F(u_k)\|\le \hat{c}^k \|F(u_0)\| \quad {\rm and} \quad
\|u_k-u^*\|\le \left(\frac{1+c}{1-c}\right)\hat{c}^k\|u_0-u^*\|.$$
%\end{equation}
Without the differentiability of $G$, Chen and Kelley \cite{ChenKelley2015} showed the r-linear
convergence of EDIIS(m) with r-factor $\hat{c}=c^{1/(m+1)}$ as follows,
%\begin{equation}\label{CK1}
$$\|u_k-u^*\|\le \hat{c}^k\|u_0-u^*\|.$$
%\end{equation}
Moreover, Bian, Chen and Kelley \cite{BCK} showed the q-linear convergence of Anderson(1) and EDIIS(1) with
q-factor $(3c-c^2)/(1-c)$ for general nonsmooth fixed point problems in a Hilbert space, and
r-linear convergence of Anderson(m) and EDIIS(m) with r-factor
$\hat{c}\in (c,1)$ for a class of integral equations in which the operator can be written as
the sum of a smooth term and a nonsmooth term having a sufficiently small Lipschitz constant.
Zhang et al \cite{Zhang} proposed a globally convergent variant of Anderson acceleration for nonsmooth fixed point problems, but did not provide a rate of convergence. The first mathematical view to show the superiority of local convergence of Anderson method for
the discretizations of the steady Navier-Stokes equations was proved by Pollock, Rebholz and Xiao in \cite{Pollock}. And the similar idea was extended to a more general fixed-point iterations by
Evans, Pollock, Rebholz, and Xiao \cite{Evans}.
Most recently, Pollock and Rebholz \cite{Pollock2} showed a novel one-step bound of Anderson method with a more general acceleration iteration,  which not only sharpens the convergence results for contractive mapping in \cite{Evans}, but also explains some mechanism of Anderson acceleration for noncontractive cases.
Overall, Anderson acceleration can significantly improve the computational performance
of the fixed point method in practice.
We refer the readers to \cite{Evans,Fang,Pollock2,Walker} and references therein for detailed discussions on its research history and practical applications.

Throughout this paper, we suppose $\Omega$ is defined by
\begin{equation}\label{eq-omiga}
\Omega=\{ w \in  \mathbb{R}^l \, |\, \underline{w}\le w\le \overline{w}\}
\end{equation}
with
$\underline{w} \in \{\{-\infty\}\cup \mathbb{R}\}^{l}$, $\overline{w}\in\{\{\infty\}\cup\mathbb{R}\}^{l}$
and $\underline{w}<\overline{w}$. Then,
$P_{\Omega}$ can be expressed by the following composite max form
\begin{equation}\label{eq-P1}
P_{\Omega}(w)={\rm argmin}_{v\in \Omega} \|v-w\|^2 =\max\{\underline{w}-w,0\}+{w}-\max\{w-\overline{w},0\},
\end{equation}
where ``max'' means componentwise. The formulation of $P_{\Omega}$ in \eqref{eq-P1} will play a key role in the analysis of this paper.
Here we declare that $(-\infty)-a=-\infty$ and $a-(\infty)=-\infty$ for any
$a\in\mathbb{R}$.
When $\Omega=\mathbb{R}^l_+:=\{w \in  \mathbb{R}^l \, |\, w\ge 0\}$, the expression of $P_{\Omega}$ in \eqref{eq-P1}
is reduced to
$$P_{\Omega}(w)=\max\{-w,0\}+{w}.$$
In particular, if $\underline{w}_i=-\infty$ and $\overline{w}_i=\infty$ for all $i\in \{1,\ldots,l\}$,
then $G=H(Q(u))$ is Lipschitz continuously differentiable on $D$, which is
the case considered in \cite{TothKelley2015}. Thus,
we focus on the case that there is at least an $i\in \{1,\ldots,l\}$ such that
$-\infty< \underline{w}_i$ or $\overline{w}_i<\infty$, which means that $G$ is nonsmooth on
$D$ in general.

The contributions of this paper are new convergence results of Anderson acceleration method
for composite max fixed point problem (\ref{eq1-0}). In section \ref{section2},
we prove that Anderson(1) and EDIIS(1) are q-linear convergent for problem (\ref{eq1-0})
with q-factor $\hat{c}\in (\frac{2c-c^2}{1-c}, 1)$, which can be strictly smaller than the existing q-factor $(3c-c^2)/(1-c)$ proved in \cite{BCK,TothKelley2015}. In section \ref{section3}, we give the
contraction consistent properties between $G$ and its smoothing approximations. Then,
we propose a new smoothing approximation
${\cal G}(\cdot, \mu)$ of $G$. We show that there is $\bar{\mu}>0$, such that ${\cal G}(\cdot,\mu)$ is continuously differentiable, contractive on $D$, and
$u^*={\cal G}(u^*, \mu)=G(u^*)$, for any fixed $\mu \in (0, \bar{\mu}]$.
To improve the ability and performance of Anderson acceleration method
for solving problem (\ref{eq1-0}), we propose a smoothing Anderson acceleration (s-Anderson(m)) in Algorithm \ref{algo-o1} with the proposed smoothing function of $G$ and updating scheme for smoothing parameters. We prove that s-Anderson(m) for (\ref{eq1-0}) owns the same r-linear convergence rate as Anderson(m) for continuously differentiable problems.
In section \ref{section4}, we use numerical examples from constrained minimax problems, pricing American options and nonsmooth Dirichlet problem to illustrate our theoretical results. Preliminary
numerical results show that s-Anderson(m) can efficiently solve the nonsmooth fixed point problem (\ref{eq1-0}) and outperform
Anderson(m) in most cases.
\section{q-linear convergence of Anderson(1) and EDIIS(1)}\label{section2}
For $m_k=1$, the optimal solution of problem (\ref{Anderson_alpha}) owns the closed form
$(1-\alpha_k, \alpha_k)^{\rm T}$ with
\begin{equation}\label{eq-alpha}
\alpha_k=\frac{F_k^{\rm T}({F}_k-{F}_{k-1})}{\|F_k-F_{k-1}\|^2}
\end{equation}
and the iterate can be expressed as
\begin{equation}\label{eq-u1}
u_{k+1}=(1-\alpha_k){G}(u_k)+\alpha_k{G}(u_{k-1}).
\end{equation}

In the remainder of this paper, we need the following assumption.
\begin{assumption}\label{ass-Q-H}
Functions $Q$ and  $H$ in (\ref{eq1-0}) satisfy the following conditions.
\begin{itemize}
\item [(i)] $Q$ is Lipschitz continuously differentiable on
$D$ with Lipschitz constant $c_Q$.
\item [(ii)] $H$ is Lipschitz continuously differentiable on an open set $D_H$ containing $\Omega$ as a subset with Lipschitz constant $c_H$.
\item [(iii)] $c:=c_Hc_Q<1$.
\end{itemize}
\end{assumption}

Note that the Lipschitz continuous differentiability of $Q$ and $H$ cannot imply the
differentiability of $G$ on $D$ due to the existence of projection operator $P_{\Omega}$
in its formulation. Since $P_{\Omega}$ is Lipschitz continuous with Lipschitz constant $1$, from
 \[\begin{aligned}
 \|H(P_{\Omega}(Q(u))) - H(P_{\Omega}(Q(v)))\|\leq &c_H\|P_{\Omega}(Q(u))-P_{\Omega}(Q(v))\|\\
 \le& c_H\|Q(u) - Q(v)\|
 \leq c_Hc_Q\|u-v\|,
 \end{aligned}\]
we find that $G$ in (\ref{eq1-0}) is a contraction mapping on $D$ with
factor $c=c_Hc_Q$ under Assumption \ref{ass-Q-H}. Then, it gives
\begin{equation}\label{smoothing3}
(1-c)\|u-u^*\|\leq\|F(u)\|\leq(1+c)\|u-u^*\|,\quad \forall u\in D.
\end{equation}

The following theorem shows that the local $q$-linear convergence factor of
Anderson(1) and EDIIS(1) can be improved to any $\hat{c}\in(\frac{2c-c^2}{1-c}, 1)$ for
(\ref{eq1-0}), which can be
 strictly smaller than the factor $\frac{\;3c-c^2\;}{\;1-c\;}$ given in \cite{BCK,TothKelley2015}.
\begin{theorem}\label{theorem2}
 Let $\{u_k\}$ be the sequence generated by Anderson(1) for (\ref{eq1-0}). Suppose Assumption
 \ref{ass-Q-H} holds and
$\bar{c}=\frac{2{c}-{c}^2}{1-{c}}<1$.
For any $\hat{c}\in\left(\bar{c}, 1\right)$,
if $u_0$ is sufficiently close to $u^*$, then $\{u_k\}$
 converges to $u^*$ q-linearly with factor $\hat{c}$, i.e.
 \begin{equation}\label{eq6-2}
\|{F}(u_{k+1})\|\le \hat{c}\|{F}(u_{k})\|,\quad k=0,1,\ldots.
\end{equation}
\end{theorem}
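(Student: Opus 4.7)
The plan is to establish by induction a one-step estimate $\|F(u_{k+1})\| \le \bar c\,\|F(u_k)\| + O(\|F(u_k)\|^2)$ inside a small ball $\mathcal{B}(\rho, u^*)$, and then for any $\hat c > \bar c$ to choose $\rho$ small enough that the quadratic remainder is absorbed into the slack $(\hat c - \bar c)\|F(u_k)\|$. This yields $\|F(u_{k+1})\| \le \hat c\,\|F(u_k)\|$ and keeps $u_{k+1} \in \mathcal{B}(\rho, u^*)$, so the induction closes.

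First I would decompose the residual by setting $v_k = (1-\alpha_k)u_k + \alpha_k u_{k-1}$ and $\bar F_k = (1-\alpha_k)F_k + \alpha_k F_{k-1}$, so that by (\ref{u_k+1}) $u_{k+1} = v_k + \bar F_k$ and
$F(u_{k+1}) = [G(u_{k+1}) - G(v_k)] + [G(v_k) - (1-\alpha_k)G(u_k) - \alpha_k G(u_{k-1})]$.
The first bracket is immediate: contractivity of $G$ gives $\|G(u_{k+1}) - G(v_k)\| \le c\|\bar F_k\|$, and the optimality of $\alpha_k$ in (\ref{Anderson_alpha}) (taking the trial $\alpha = 0$) gives $\|\bar F_k\| \le \|F_k\|$. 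This contributes the leading "$c$" in the decomposition $\bar c = c + c/(1-c)$.

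The crux is the second bracket, where the composite structure $G = H \circ P_\Omega \circ Q$ in (\ref{eq1-0}) is exploited. Under Assumption \ref{ass-Q-H}, Taylor expansions of $Q$ about $v_k$ and of $H$ about $\tilde p_k := (1-\alpha_k) P_\Omega(Q(u_k)) + \alpha_k P_\Omega(Q(u_{k-1}))$ reduce this bracket, modulo a remainder of order $\|u_k - u_{k-1}\|^2$, to $c_H\,\|P_\Omega(\tilde q_k) - \tilde p_k\|$ with $\tilde q_k := (1-\alpha_k)Q(u_k) + \alpha_k Q(u_{k-1})$. Using the representation $P_\Omega(w) = w + \phi(\underline w - w) - \phi(w - \overline w)$ from (\ref{eq-P1}) with $\phi(\cdot) = \max\{\cdot,0\}$ acting componentwise, the identity part of $P_\Omega$ cancels on affine combinations, and in each coordinate only one of the lower- or the upper-box barrier can be active. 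Convexity of $\phi$ (together with a subgradient/Lipschitz argument to cover $\alpha_k \notin [0,1]$ in Anderson(1)) then yields the coordinatewise inequality $|\phi((1-\alpha)a + \alpha b) - (1-\alpha)\phi(a) - \alpha\phi(b)| \le \min(|\alpha|,|1-\alpha|)|a - b|$, and multiplying through by $c_H c_Q$ gives the key structural bound $\|G(v_k) - (1-\alpha_k)G(u_k) - \alpha_k G(u_{k-1})\| \le c\,\min(|\alpha_k|,|1-\alpha_k|)\,\|u_k - u_{k-1}\| + O(\|u_k - u_{k-1}\|^2)$. This is precisely one factor of $c$ rather than the $2c$ that a naive triangle-inequality bound would produce, which is where the BCK factor $(3c-c^2)/(1-c)$ gets sharpened.

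Converting to an $\|F_k\|$-bound uses the first-order condition for (\ref{Anderson_alpha}), which yields $|\alpha_k|\|F_k - F_{k-1}\| \le \|F_k\|$, together with the contractivity estimate $\|F_k - F_{k-1}\| \ge (1-c)\|u_k - u_{k-1}\|$ (from $F_k - F_{k-1} = (G(u_k) - G(u_{k-1})) - (u_k - u_{k-1})$); hence $|\alpha_k|\,\|u_k - u_{k-1}\| \le \|F_k\|/(1-c)$, so in particular $\min(|\alpha_k|,|1-\alpha_k|)\,\|u_k - u_{k-1}\| \le \|F_k\|/(1-c)$. Adding the two bracket bounds gives $\|F(u_{k+1})\| \le c\|F_k\| + \tfrac{c}{1-c}\|F_k\| + O(\|F_k\|^2) = \bar c\,\|F_k\| + O(\|F_k\|^2)$, and the induction closes as announced. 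The main obstacle is the coordinatewise projection estimate of the third paragraph: proving the $\min(|\alpha|,|1-\alpha|)|a-b|$ inequality uniformly in $\alpha \in \mathbb{R}$ (not just $\alpha \in [0,1]$ as in EDIIS(1)) via a combined convexity-plus-Lipschitz argument on $\phi = \max\{\cdot,0\}$ is exactly the point at which BCK's factor of $2$ collapses to a single $c$, and hence the q-factor drops from $(3c-c^2)/(1-c)$ to $\bar c$.
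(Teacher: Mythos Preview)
Your overall architecture is correct and the induction closes with the claimed q-factor $\bar c=(2c-c^2)/(1-c)$, but the route you take to the key bound on the second bracket is genuinely different from the paper's.

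The paper does \emph{not} prove a pointwise inequality for $\phi=\max\{\cdot,0\}$. Instead it introduces an auxiliary $C^1$ smoothing $\Phi$ of $P_\Omega$ (built from $\psi(t)=\max\{t,0\}$ replaced on $|t|\le\varrho$ by $(t+\varrho)^2/(4\varrho)$) and splits $G=G_S+G_N$ with $G_S=H\circ\Phi\circ Q$ smooth and $G_N=G-G_S$. The crucial technical step is that $G_N$ has Lipschitz constant at most $\tfrac12 c+\epsilon$ near $u^*$; two applications of this to $N_K=G_N(v_K)-(1-\alpha_K)G_N(u_K)-\alpha_K G_N(u_{K-1})$ then give $(c+2\epsilon)|\alpha_K|\|u_K-u_{K-1}\|$, which converts to $\tfrac{c}{1-c}\|F_K\|$ exactly as in your last paragraph. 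Your alternative---the elementary scalar inequality $|\phi((1-\alpha)a+\alpha b)-(1-\alpha)\phi(a)-\alpha\phi(b)|\le\min(|\alpha|,|1-\alpha|)\,|a-b|$, valid for all $\alpha\in\mathbb{R}$---is correct (it follows from $\phi(t)=(t+|t|)/2$ and a short case analysis, with the symmetry $\alpha\leftrightarrow 1-\alpha$) and achieves the same $c/(1-c)$ contribution without any smoothing machinery. This is arguably more transparent about why BCK's factor $2c$ collapses to $c$; the paper's decomposition, on the other hand, motivates and anticipates the smoothing apparatus developed in Section~\ref{section3}.

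Two places in your write-up need tightening. First, your Taylor remainders are not ``$O(\|u_k-u_{k-1}\|^2)$'' but $O\bigl(|\alpha_k|\,|1-\alpha_k|\,\|u_k-u_{k-1}\|^2\bigr)$, and since $|\alpha_k|$ is a priori unbounded in Anderson(1) you must, as the paper does, use the optimality bounds $|\alpha_k|\le\|F_k\|/\|F_k-F_{k-1}\|$ and $|1-\alpha_k|\le\|F_{k-1}\|/\|F_k-F_{k-1}\|$ together with $\|u_k-u_{k-1}\|\le\|F_k-F_{k-1}\|/(1-c)$ to convert this into $O(\|F_{k-1}\|)\,\|F_k\|$, which is then absorbed by choosing the initial radius small. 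Second, for the Taylor expansion of $H$ about $\tilde p_k$ (and for the ``only one barrier active'' claim applied at $\tilde q_k$) you need $\tilde p_k\in D_H$ and $[\tilde q_k]_i$ close to $[Q(u^*)]_i$; both hold once you invoke the inductive bound $|\alpha_k|\le\hat c/(1-\hat c)$, but this should be stated explicitly.
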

\begin{proof}
Give $\epsilon>0$. Reduce $\delta>0$ if necessary such that $\delta\leq\epsilon$ and $\mathcal{B}(\delta, u^*)\subseteq D$.
Since $c\leq\bar{c}<\hat{c}$, (\ref{eq6-2}) is trivially true for $k=0$. Then, we prove
(\ref{eq6-2}) by induction and assume it holds for $0\leq k\leq K$.
Let
$$
0<\varrho\leq\min\{1,(\overline{{w}}_i-\underline{w}_i)/3: i=1,2,\ldots,l\}.
$$
Here we declare that $\overline{{w}}_i-\underline{w}_i =\infty$ if  $\overline{{w}}_i=\infty$ or/and  $\underline{w}_i=-\infty$.

By \eqref{eq-alpha}, we have
\begin{equation}\label{eq3}
|\alpha_k|\leq\frac{\|F(u_k)\|}{\|F(u_k)-F(u_{k-1})\|}\quad\mbox{and}\quad
|1-\alpha_k|\leq\frac{\|F(u_{k-1})\|}{\|F(u_k)-F(u_{k-1})\|},\,\,\forall k.
\end{equation}

Similar to the analysis in \cite[Theorem 1.3]{BCK} and
 by the hypothesis in \eqref{eq6-2} for $0\leq k\leq K$, we have that
\begin{equation}\label{add1}
|\alpha_k|\leq\frac{\hat{c}}{1-\hat{c}}\quad\mbox{and}\quad
\|u_k-u^*\|\leq\frac{\|F(u_k)\|}{1-c}\leq\frac{\hat{c}^k(1+c)\|u_0-u^*\|}{1-c}.
\end{equation}
Then,
$$\begin{aligned}
\|u_{K+1}-u^*\|
=&\|(1-\alpha_K){G}(u_K)+\alpha_K{G}(u_{K-1})-G(u^*)\|\\
\leq &c|1-\alpha_K|\|u_K-u^*\|+c|\alpha_K|\|u_{K-1}-u^*\|\leq\frac{2c(1+c)\hat{c}^K}{(1-c)(1-\hat{c})}\|u_0-u^*\|.
\end{aligned}$$
Similarly,
$$
\begin{aligned}
\|(1-\alpha_K)u_K+\alpha_Ku_{K-1}-u^*\|\leq\frac{2(1+c)\hat{c}^K}{(1-c)(1-\hat{c})}\|u_0-u^*\|.
\end{aligned}$$
Thus,
there exists
$\delta_0>0$ such that if $u_0\in\mathcal{B}(\delta_0, u^*)$,  then
\[u_{k}\in\mathcal{B}(\delta,u^*),\, k=0,\ldots,K+1\quad {\rm and} \quad \,
(1-\alpha_K)u_K+\alpha_Ku_{K-1}\in\mathcal{B}(\delta,u^*).\]

Now we estimate $\|F(u_{K+1})\|$ by using
\begin{equation}\label{eqF}
\|F(u_{K+1})\|=\|G(u_{K+1})-u_{K+1}\|
\leq\|A_K\|+\|B_K\|,\end{equation}
where
\[A_K=G(u_{K+1})-G((1-\alpha_K)u_{K}+\alpha_Ku_{K-1}),\quad B_K=G((1-\alpha_K)u_{K}+
\alpha_Ku_{K-1}) -u_{K+1}.\]

The estimate of $A_k$ is straightforward as it is in \cite{BCK,ChenKelley2015,TothKelley2015},
which gives
\begin{equation}\label{eq-A1}
\|A_K\|\leq {c}\|(1-\alpha_K)(G(u_K)-u_K) +\alpha_K(G(u_{K-1})-u_{K-1})\|\le {c}\|F(u_K)\|.
\end{equation}
Now, we estimate $\|B_K\|$.  First, we note that
$\psi(t)=\left\{\begin{aligned}
&\max\{0,t\} &&\mbox{if $|t|>\varrho$}\\
&\frac{(t+\varrho)^2}{4\varrho} &&\mbox{if $|t|\leq \varrho$}
\end{aligned}\right.$ is a smoothing approximation of $\max\{t,0\}$.
Then, by \eqref{eq-P1},
$$\Phi(w)=\Psi(\underline{w}-w)+{w}-\Psi(w-\overline{w})$$
is a smoothing approximation of $P_{\Omega}(w)$, where
$$
\Psi(v)=(\psi(v_1),\ldots,\psi(v_{l}))^{\rm T}.$$
By virtue of the value of
$\varrho$, for any $i\in\{1,\ldots,l\}$ and $w_i\in\mathbb{R}$, at most one of $|\underline{w}_i-w_i|\leq\varrho$
and $|w_i-\overline{w}_i|\leq\varrho$ holds.
Then, since $|\psi'(t)|\leq1$, $\forall t\in\mathbb{R}$, for any $w,\tilde{w}\in\mathbb{R}^{l}$,
we obtain
\begin{equation}\label{eq-pro1}
\|\Phi(w)-\Phi(\tilde{w})\|\leq 2\|w-\tilde{w}\|.
\end{equation}

Next, recalling the definition of $\psi$, we have
\[\max\{t,0\}-\psi(t)=
\left\{\begin{aligned}
&0\quad &&\mbox{if $|t|>\varrho$}\\
&-{(\varrho-|t|)^2}/{4\varrho}&&\mbox{if $|t|\leq\varrho$},
\end{aligned}\right.\]
which implies the absolute value and Lipschitz constant of $\max\{t,0\}-\psi(t)$ on $\mathbb{R}$
are upper bounded by ${\varrho}/{4}$ and ${1}/{2}$, respectively. Then, for any $w,\tilde{w}\in\mathbb{R}^{l}$,
we have
\begin{eqnarray}
&\|P_{\Omega}(w)-\Phi(w)\|\leq{\sqrt{l}}\varrho/4,\label{eq-pro3}\\
&\|P_{\Omega}(w)-\Phi(w)-(P_{\Omega}(\tilde{w})-\Phi(\tilde{w}))\|\leq\frac{1}{2}\|w-\tilde{w}\|.\label{eq-pro2}
\end{eqnarray}
Denote
\[G_S(u)=H(\Phi(Q(u)))\quad\mbox{and}\quad G_N(u)=G(u)-G_S(u).\]

Then from the definition of $u_{K+1}$ in (\ref{eq-u1}), we have
\begin{equation}\label{eq2}
\|B_{K}\|\leq\|M_K\|+\|N_K\|,
\end{equation}
with
\[M_K=G_S((1-\alpha_K)u_{K}+\alpha_Ku_{K-1})-(1-\alpha_K)G_S(u_{K})-\alpha_KG_S(u_{K-1})\]
and
\[N_K=G_N((1-\alpha_K)u_{K}+\alpha_Ku_{K-1})-(1-\alpha_K)G_N(u_{K})-\alpha_KG_N(u_{K-1}).\]

Notice that $\psi$ is Lipschitz continuously differentiable on $\mathbb{R}$.
By the Lipschitz continuous differentiability of $Q$ and $H$, $G_S$ is Lipschitz continuously differentiable on $\mathcal{B}(\delta, u^*)$,
which inspires us to estimate $M_k$ exactly by the same way as in \cite[Corollary 2.5]{TothKelley2015} to get
\begin{equation}\label{eq26-2}
\begin{aligned}
\|M_k\|\leq\frac{\gamma|\alpha_K||1-\alpha_K|\|u_K-u_{K-1}\|^2}{2}\leq\frac{\gamma\|F(u_{K-1})\|}{2(1-{c})^2}\|F(u_K)\|
\leq\frac{\gamma(1+{c})\epsilon}{2(1-{c})^2}\|F(u_K)\|,
\end{aligned}\end{equation}
where $\gamma$ is the Lipschitz constant of $G_S'$ on $\mathcal{B}(\delta,u^*)$ and we use
$\|F(u_{K-1})\|\leq(1+{c})\|u_{K-1}-u^*\|\leq(1+{c})\delta\leq(1+{c})\epsilon$ in the last inequality.

The final stage of this proof is to evaluate $\|N_K\|$, which is the main part in this proof.

To do this, the
first thing is to evaluate the Lispchitz constant of $G_N$ around $u^*$.
For any $u,v\in\mathcal{B}(\delta,u^*)$, by the Lipschitz continuous differentiability of
$H$ and the mean value theorem for a vector-valued function, we have
\[\begin{aligned}
&\|G_N(u)-G_N(v)\|
=\|H(P_{\Omega}(Q(u)))-H(\Phi(Q(u)))-H(P_{\Omega}(Q(v)))+H(\Phi(Q(v)))\|\\
=&\left\|\left(\int_0^1H'(\hat{\xi}(t)){\rm d}t\right)(P_{\Omega}(Q(u))-P_{\Omega}(Q(v)))-\left(\int_0^1H'(\bar{\xi}(t)){\rm d}t\right)(\Phi(Q(u))-\Phi(Q(v)))\right\|,
\end{aligned}\]
where $\hat{\xi}(t)=tP_{\Omega}(Q(v))+(1-t)P_{\Omega}(Q(u))$ and
$\bar{\xi}(t)=t\Phi(Q(v))+(1-t)\Phi(Q(u))$.
Denote
\[G_N^1=\left(\int_0^1H'(\hat{\xi}(t)){\rm d}t\right)(P_{\Omega}(Q(u))-P_{\Omega}(Q(v)))-\left(\int_0^1H'(\bar{\xi}(t)){\rm d}t\right)(P_{\Omega}(Q(u))-P_{\Omega}(Q(v))),\]
\[G_N^2=\left(\int_0^1H'(\bar{\xi}(t)){\rm d}t\right)(P_{\Omega}(Q(u))-P_{\Omega}(Q(v)))-\left(\int_0^1H'(\bar{\xi}(t)){\rm d}t\right)(\Phi(Q(u))-\Phi(Q(v)),\]
then
\[\|G_N(u)-G_N(v)\|\leq\|G_N^1\|+\|G_N^2\|.\]

By \eqref{eq-pro1}, \eqref{eq-pro3}, and the definitions of $\hat{\xi}(t)$ and $\bar{\xi}(t)$, for any $t\in[0,1]$, it holds
\begin{equation}\label{eq10}
\|\hat{\xi}(t)-\bar{\xi}(t)\|
\leq\|P_{\Omega}(Q(v))-\Phi(Q(v))\|
+\|P_{\Omega}(Q(u))-\Phi(Q(u))\|
\leq{\;\sqrt{l}\varrho\;}/{\;2\;}.
\end{equation}
Due to the convexity of $\Omega$, $\hat{\xi}(t)\in\Omega$ for all $t\in[0,1]$. Then, by \eqref{eq10}, we can suppose $\bar{\xi}(t)\in D_H$ for all $t\in[0,1]$ by reducing $\varrho$ if necessary. Moreover, since $u,v\in\mathcal{B}(\delta, u^*)$, $\bar{\xi}(t)$ and $\hat{\xi}(t)$ are bounded for all $t\in[0,1]$.
Then, using the Lipschitz continuous differentiability of $H$, there exists $\theta>0$
such that it holds
$$\int_0^1\left\|H'(\hat{\xi}(t))-H'(\bar{\xi}(t))\right\|{\rm d}t\leq \theta\max_{0\leq t\leq1}\|\hat{\xi}(t)-\bar{\xi}(t)\|,
$$
combining which with \eqref{eq10} gives
\[
\|G_N^1\|\leq\left(\int_0^1\left\|H'(\hat{\xi}(t))-H'(\bar{\xi}(t))\right\|{\rm d}t\right)\left\|P_{\Omega}(Q(u))-P_{\Omega}(Q(v))\right\|
\leq\left(\frac{\;\sqrt{l}\varrho\theta c_Q\;}{\;2\;}\right)\|u-v\|.
\]
Thus, by reducing $\varrho$ if necessary, we obtain
\begin{equation}\label{th4-1}\|G_N^1\|\leq\epsilon\|u-v\|.\end{equation}

To evaluate $G_N^2$, by \eqref{eq-pro2} and $\bar{\xi}(t)\in D_H$ for all $t\in[0,1]$, we have
\begin{equation}\label{th4-2}
\begin{aligned}
\|G_N^2\|\leq&\left(\int_0^1\|H'(\bar{\xi}(t))\|{\rm d}t\right)\|P_{\Omega}(Q(u))-\Phi(Q(u))-(P_{\Omega}(Q(v))-\Phi(Q(v)))\|\\
\leq&\frac{1}{2}c_Hc_Q\|u-v\|=\frac{1}{2}c\|u-v\|.
\end{aligned}
\end{equation}
Hence,
\eqref{th4-2} together with (\ref{th4-1}) gives that the Lipschitz constant of $G_N$ around $u^*$ can be bounded by $\frac{1}{2}{c}+\epsilon$. Using it to $N_K$, we have
\begin{equation}\label{eqD}
\begin{aligned}
\|N_K\|=&\|G_N(u_K -\alpha_K(u_{K}-u_{K-1}))-G_N(u_{K})+\alpha_K(G_N(u_{K})-G_N(u_{K-1}))\|\\
\leq&(\frac{1}{2}c+\epsilon)2|\alpha_K|\|u_{K}-u_{K-1}\|.
\end{aligned}
\end{equation}

Then, \eqref{eq3} and (\ref{eqD}) imply
\begin{equation}\label{eq23}
\|N_K\|\leq\frac{c+2\epsilon}{1-{c}}\|F(u_K)\|.
\end{equation}

We obtain from (\ref{eqF}), \eqref{eq-A1}, \eqref{eq2}, \eqref{eq26-2} and (\ref{eq23}) that
\[
\|F(u_{K+1})\|\leq
\left(\bar{c}+\iota\epsilon\right)\|F(u_K)\|
\]
with $\iota=\frac{\gamma(1+{c})}{2(1-{c})^2}+\frac{2}{1-{c}}$.
Due to the arbitrariness of $\epsilon\in(0,1)$, the estimation in (\ref{eq6-2}) holds for $k=K+1$
by reducing $\epsilon$ if necessary so that
$\iota\epsilon\leq\hat{c}-\bar{c}$. This completes the proof.
\end{proof}

The important technique in the proof of Theorem \ref{theorem2} is the decomposition method of $F(u_{k+1})$, especially the structure and analysis of $N_k$,
which reduces the Lipschitz constant of the nonosmooth part of $B_k$ by half.

In EDIIS(1), $\alpha_k$ is chosen as the minimizer of the optimization problem
\[\min \frac{1}{2}\left\|(1-\alpha)F_k+\alpha F_{k-1}\right\|^2, \quad {\rm s.t.} \quad 0\le  \alpha\le 1. \]
This is a convex optimization problem
%and the first order optimality condition is
%\[( F_k^T(F_{k-1}-F_k)+\alpha_k \|F(u_{k-1})-F(u_k)\|^2)(\alpha -\alpha_k)\ge 0, \quad \mbox{for all %$\alpha\in [0,1]$}.\]
and its solution $\alpha_k$
can be expressed by the formulation with the middle operator as
$$
\alpha_k={\rm mid} \left\{ 0, \, \frac{F_k^{\rm T}(F_{k}-F_{k-1})}{\|F_{k-1}-F_k\|^2}, \, 1\right\}
\footnote{mid$(0,a,1)=\left\{\begin{array}{ll}
0, & a<0\\
a, & a\in [0,1]\\
1, & a >1.
\end{array}
\right.
$}.
$$
Following the proof of Theorem \ref{theorem2}, it is clear that (\ref{eq26-2}) and (\ref{eq23}) hold for
$\alpha_k=0$ and $\alpha_k=1$, which are the points that we only need to check for the EDIIS(1) with respect to Anderson(1). Thus, we have the following statement.
\begin{corollary}\label{corollary2}
Suppose that the assumptions of Theorem \ref{theorem2} hold. Then the sequence $\{u_k\}$ generated by
EDIIS(1) satisfies (\ref{eq6-2}).
\end{corollary}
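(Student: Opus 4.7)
The plan is to adapt the proof of Theorem \ref{theorem2} essentially verbatim, exploiting the fact that EDIIS(1) differs from Anderson(1) only by the box constraint $\alpha_k \in [0,1]$. I would set up exactly the same induction on $k$, the same decomposition $F(u_{K+1}) = A_K + B_K$ with $B_K = M_K + N_K$, and the same notation $G_S$, $G_N$, $\Phi$, $\Psi$, $\psi$ used in Theorem \ref{theorem2}. The whole question is then whether the three key bounds (\ref{eq-A1}), (\ref{eq26-2}) and (\ref{eq23}) survive the extra constraint.

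First I would handle (\ref{eq-A1}). In the Anderson(1) proof this inequality uses optimality of $\alpha_K$ together with the fact that $\alpha = 0$ is a feasible test point, yielding $\|(1-\alpha_K)F_K + \alpha_K F_{K-1}\| \le \|F_K\|$. Since $\alpha = 0$ remains feasible for EDIIS(1), the same one-line argument goes through, so $\|A_K\| \le c\|F(u_K)\|$ holds unchanged.

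Second, for $\alpha_K$ lying in the interior $(0,1)$, the EDIIS(1) minimizer coincides with the closed form (\ref{eq-alpha}) of Anderson(1), so the bounds (\ref{eq3}) and hence (\ref{eq26-2}) and (\ref{eq23}) apply term-for-term and the Theorem \ref{theorem2} argument carries over with no modification. The only genuinely new cases are the two corners $\alpha_K\in\{0,1\}$, and, as the paragraph preceding the corollary hints, these are actually the easy ones. If $\alpha_K = 0$, then $u_{K+1} = G(u_K)$ and $(1-\alpha_K)u_K + \alpha_K u_{K-1} = u_K$, so by the very definitions of $M_K$ and $N_K$ both terms vanish identically. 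Symmetrically, if $\alpha_K = 1$, then $u_{K+1} = G(u_{K-1})$ and the convex combination collapses to $u_{K-1}$, again yielding $M_K = N_K = 0$. In either corner case $B_K = 0$, so $\|F(u_{K+1})\| \le \|A_K\| \le c\|F(u_K)\| \le \hat c \|F(u_K)\|$, which is strictly better than the bound required in (\ref{eq6-2}).

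I do not anticipate a real obstacle here: the nonsmooth analysis of $N_K$, which was the delicate step in Theorem \ref{theorem2}, is only needed when $\alpha_K$ is strictly interior, and in that regime the Anderson(1) analysis applies verbatim. Once the three bounds are in hand in all cases, the same induction used for Theorem \ref{theorem2}, with the same $\epsilon$ chosen to absorb $\iota\epsilon \le \hat c - \bar c$, immediately delivers (\ref{eq6-2}) for EDIIS(1). The only mild bookkeeping point is to verify that the preliminary estimates in (\ref{add1}) still hold; but since EDIIS(1) gives $|\alpha_K|\le 1 \le \hat c/(1-\hat c)$ automatically, the neighborhood $\mathcal{B}(\delta_0,u^*)$ from Theorem \ref{theorem2} can be reused unchanged.
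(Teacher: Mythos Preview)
Your proposal is correct and follows exactly the paper's own argument: the paper merely remarks that one only needs to check the corner cases $\alpha_K\in\{0,1\}$, where (\ref{eq26-2}) and (\ref{eq23}) hold trivially (as you observe, $M_K=N_K=0$ there), while the interior case reduces to Anderson(1). One small slip: the inequality $1\le \hat c/(1-\hat c)$ you invoke in the bookkeeping remark fails when $\hat c<1/2$, but this is harmless since the uniform bound $|\alpha_K|\le 1$, $|1-\alpha_K|\le 1$ already suffices to keep the iterates in $\mathcal{B}(\delta,u^*)$ (with at worst a slightly smaller $\delta_0$).
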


Since the results in Theorem \ref{theorem2} and Corollary \ref{corollary2} are local convergence results of Anderson(1) and EDIIS(1), the Lipschitz continuous differentiability of $Q$ and $H$ around $u^*$ and $P_{\Omega}(Q(u^*))$ is enough to guarantee these statements.
\section{Anderson acceleration method with smoothing approximation}\label{section3}
\subsection{Smoothing approximation}\label{section3.1}
In this subsection, we introduce some smoothing approximations of the nonsmooth contraction mapping $G$ for
finding its fixed point. For a function $\omega:\mathbb{R}^n\times(0,1]\rightarrow\mathbb{R}^n$,
$\omega'(y,\mu)$ always denotes the derivative of $\omega$ with respect to $y$ for fixed $\mu\in(0,1]$ in what follows. We define a smoothing function of $\max\{t,0\}$ at first.
\begin{definition}\label{smoothing}
\cite{Chen} We call $\psi:\mathbb{R}\times(0,1]\rightarrow\mathbb{R}$ a smoothing function of $\max\{t,0\}$ in $\mathbb{R}$, if $\psi(\cdot,\mu)$ is continuously differentiable in
$\mathbb{R}$ for any fixed $\mu>0$, and the following conditions hold.
\begin{itemize}
\item[\rm(i)] There is a $\kappa_{\psi}>0$ such that
for any $t\in \mathbb{R}$ and $\mu\in(0,1]$, $|\psi(t,\mu)-\max\{t,0\}|\le \kappa_{\psi}\mu$.
\item [\rm(ii)] For any $t\in \mathbb{R}$, it holds
$\left\{\lim_{s\rightarrow t\,\mu\downarrow0}\psi'(s,\mu)\right\}\subseteq\partial \left(\max\{t,0\}\right)$,
where $\partial$ indicates the Clarke subdifferential \cite{Clarke}.
\end{itemize}
\end{definition}

Definition \ref{smoothing}-(i) implies that
$
\lim_{s\rightarrow t\,\mu\downarrow0}\psi(s,\mu)=\max\{t,0\}$ and Definition \ref{smoothing}-(ii) implies the gradient consistency.
Smoothing functions for the $\max$ function have been studied in numerical methods for optimization and differential equations \cite{Chen}.
Four widely used smoothing functions of $\max\{t,0\}$ are as follows:
\begin{equation}\label{eq1-2}
\begin{aligned}
&\psi_1(t,\mu)=t+\mu\ln(1+e^{-\frac{t}{\mu}}),
\quad
&&\psi_2(t,\mu)=\frac{1}{2}(t+\sqrt{t^2+4\mu^2}),\\
&\psi_3(t,\mu)=\left\{\begin{aligned}
&\max\{0,t\} &&\mbox{if}\, |t|>\mu\\
&\frac{(t+\mu)^2}{4\mu} &&\mbox{if}\, |t|\leq \mu,
\end{aligned}\right.\quad &&\psi_4(t,\mu)=\left\{\begin{aligned}
&t +\frac{\mu}{2}e^{-\frac{t}{\mu}} &&\mbox{if}\, t> 0\\
&\frac{\mu}{2}e^{\frac{t}{\mu}} &&\mbox{if}\, t \leq 0.
\end{aligned}\right.
\end{aligned}
\end{equation}
Let $\psi$ be a smoothing function of $\max\{t,0\}$. For $v\in\mathbb{R}^l$, set
\begin{equation}\label{eq13}
\Phi(v,
\mu)=(\phi_1(v_1,\mu),\phi_2(v_2,\mu),\ldots,\phi_l(v_l,\mu))^T,
\end{equation}
where
\begin{equation}\label{eq20}
\phi_i(t,\mu)=\psi(\underline{w}_i-t,\mu)+t-\psi(t-\overline{w}_i,\mu), \,\,i=1,2,\ldots,l.
\end{equation}
It is clear that $\Phi(\cdot,\mu)$ is continuously differentiable on $\mathbb{R}^l$ for any fixed $\mu\in(0,1]$, and by \eqref{eq-P1}, we have
\begin{equation}\label{eq12-2}
\lim_{s\rightarrow t\,\mu\downarrow0}\phi_i(s,\mu)=P_{[\underline{w}_i,\overline{w}_i]}(t)\quad\mbox{and}\quad
|\phi_i(t,\mu)-P_{[\underline{w}_i,\overline{w}_i]}(t)|\leq 2\kappa_{\psi}\mu, \quad\forall t\in\mathbb{R},\,\mu\in(0,1].
\end{equation}
Then, since $\underline{w}_i<\overline{w}_i$ for all $i=1,2,\ldots,l$, we obtain
\begin{equation}\label{eq12}
\left\{\begin{aligned}
&\lim_{s\rightarrow{t},\mu\downarrow0}\phi_i'(s,\mu)=\{0\}&&\mbox{if ${t}<\underline{w}_i$ or ${t}>\overline{w}_i$}\\
&\lim_{s\rightarrow{t},\mu\downarrow0}\phi_i'(s,\mu)=\{1\}&&\mbox{if $\underline{w}_i<{t}<\overline{w}_i$}\\
&\left\{\lim_{s\rightarrow{t},\mu\downarrow0}\phi_i'(s,\mu)\right\}\subseteq
[0,1]&&\mbox{if ${t}=\underline{w}_i$ or ${t}=\overline{w}_i$}.
\end{aligned}\right.
\end{equation}
\begin{proposition}\label{p4_3}
Let $\psi$ be a smoothing function of $\max\{t,0\}$ with parameter $\kappa_{\psi}$ in Definition \ref{smoothing}-(i).
Suppose Assumption \ref{ass-Q-H} holds and $\Omega+\mathcal{B}(2\kappa_{\psi}\sqrt{l},\textbf{0})\subseteq D_H$, then
the function
\begin{equation}\label{G-s}
\mathcal{G}(u,\mu)=H(\Phi(Q(u),\mu))
\end{equation}
owns the following properties.
\begin{itemize}
\item[\rm(i)] $\mathcal{G}(\cdot,\mu)$ is continuously differentiable on $D$ for any fixed $\mu\in(0,1]$.
\item[\rm(ii)] There is a $\kappa_{G}>0$ such that
for any $u\in D$ and $\mu\in(0,1]$, $\|\mathcal{{G}}(u,\mu)-G(u)\|\le \kappa_G \mu$.
\item[\rm(iii)] For any $u\in D$,
$\limsup_{z\rightarrow u,\mu\downarrow0}\|\mathcal{G}'(z,\mu)\|\leq c$.
\item [\rm(iv)] For any ${c}_S\in({c},1)$, there exists $\hat{\mu}\in(0,1]$ such that
for any fixed $\mu\in(0,\hat{\mu}]$, $\|\mathcal{{G}}'(u,\mu)\|\leq{c}_S$,
$\forall u\in D$, which implies that $\mathcal{{G}}(\cdot,\mu)$ is a contraction
mapping on $D$ with factor $c_S$, i.e.
\begin{equation}\label{lemma-G-1}
\|\mathcal{{G}}(u,\mu)-\mathcal{{G}}(v,\mu)\|\leq {c}_S\|u-v\|,\quad \mbox{for all $u,v\in D$, $\mu\in(0,\hat{\mu}]$}.
\end{equation}
\item [\rm(v)] Let $u_{\mu}$ be a fixed point of $\mathcal{{G}}(\cdot,\mu)$, then $\|u_{\mu}-u^*\|\leq\left(\frac{\kappa_G}{1-c}\right)\mu$,
which further implies $\lim_{\mu\rightarrow0}u_{\mu}=u^*$.
\end{itemize}
\end{proposition}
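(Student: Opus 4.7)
The plan is to dispatch (i)--(iii) and (v) by fairly direct chain-rule and triangle-inequality arguments using the smoothing estimates \eqref{eq12-2} and \eqref{eq12}, and to concentrate the real effort on (iv). For (i), the composition $\mathcal{G}(\cdot,\mu) = H\circ\Phi(\cdot,\mu)\circ Q$ is $C^1$ because $Q$ and $H$ are $C^1$ by Assumption~\ref{ass-Q-H} and $\Phi(\cdot,\mu)$ is $C^1$ componentwise by construction. The only verification needed is that $\Phi(Q(u),\mu)\in D_H$; this follows from $\|\Phi(Q(u),\mu)-P_\Omega(Q(u))\|\le 2\kappa_\psi\sqrt{l}\mu$ in \eqref{eq12-2}, combined with $P_\Omega(Q(u))\in\Omega$ and the hypothesis $\Omega+\mathcal{B}(2\kappa_\psi\sqrt{l},\mathbf{0})\subseteq D_H$. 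For (ii), the Lipschitz continuity of $H$ with the same bound gives
\[\|\mathcal{G}(u,\mu)-G(u)\|\le c_H\|\Phi(Q(u),\mu)-P_\Omega(Q(u))\|\le 2c_H\kappa_\psi\sqrt{l}\,\mu,\]
so the constant $\kappa_G:=2c_H\kappa_\psi\sqrt{l}$ works.

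For (iii), I would apply the chain rule to obtain $\mathcal{G}'(z,\mu)=H'(\Phi(Q(z),\mu))\,\Phi'(Q(z),\mu)\,Q'(z)$, where $\Phi'(v,\mu)$ is diagonal with entries
\[\phi_i'(v_i,\mu)=1-\psi'(\underline{w}_i-v_i,\mu)-\psi'(v_i-\overline{w}_i,\mu).\]
The separation $\underline{w}_i<\overline{w}_i$ prevents both $\psi'$ terms from simultaneously approaching $1$ in the limit, and the gradient consistency \eqref{eq12} then forces every accumulation point of $\phi_i'$ to lie in $[0,1]$. Thus $\limsup\|\Phi'(Q(z),\mu)\|\le 1$ and $\limsup\|\mathcal{G}'(z,\mu)\|\le c_Hc_Q=c$. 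For (iv), I would upgrade this pointwise $\limsup$ to a uniform bound by using the separation quantitatively: once $\mu$ is smaller than, say, $\min_i(\overline{w}_i-\underline{w}_i)/4$, whichever of $\underline{w}_i-v_i$ and $v_i-\overline{w}_i$ is farther from zero sits deep in the negative region where $\psi'(\cdot,\mu)\to 0$, so the sum $\psi'(\underline{w}_i-v_i,\mu)+\psi'(v_i-\overline{w}_i,\mu)$ is bounded by $1+o_\mu(1)$ uniformly in $v_i\in\mathbb{R}$. Propagating through the chain rule gives $\|\mathcal{G}'(u,\mu)\|\le c+o_\mu(1)$ uniformly in $u\in D$; choosing $\hat{\mu}$ so small that the $o_\mu(1)$ term is less than $c_S-c$ yields the stated bound, and the contraction estimate \eqref{lemma-G-1} then follows by the mean value inequality on the convex set $D$.

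For (v), I would combine (ii) with the contractivity of $G$:
\[\|u_\mu-u^*\|=\|\mathcal{G}(u_\mu,\mu)-G(u^*)\|\le\|\mathcal{G}(u_\mu,\mu)-G(u_\mu)\|+\|G(u_\mu)-G(u^*)\|\le\kappa_G\mu+c\|u_\mu-u^*\|,\]
and then isolate $\|u_\mu-u^*\|\le\kappa_G\mu/(1-c)$, from which $u_\mu\to u^*$ as $\mu\downarrow 0$ is immediate. The main obstacle is (iv): lifting the pointwise $\limsup$ in (iii) to a bound that is uniform in both $u\in D$ and $\mu\in(0,\hat{\mu}]$. I expect this to require exploiting the separation $\underline{w}_i<\overline{w}_i$ in a quantitative way together with the transition-region structure of $\psi'$, rather than merely the pointwise inclusion recorded in \eqref{eq12}.
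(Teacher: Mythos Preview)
Your approach matches the paper's essentially line for line: the same constant $\kappa_G=2c_H\kappa_\psi\sqrt{l}$ for (ii), the same chain-rule factorization $\mathcal{G}'=H'\,\Phi'\,Q'$ with the diagonal bound on $\Phi'$ for (iii), and the identical triangle-inequality manipulation for (v). The paper is in fact terser than you on (iv)---after establishing (iii) it simply writes ``which guarantees items (iii) and (iv)'' without addressing the pointwise-to-uniform passage you correctly flag as the main obstacle---so your quantitative use of the separation $\underline{w}_i<\overline{w}_i$ goes beyond, rather than against, the paper's own argument.
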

\begin{proof}
From \eqref{eq12-2},
we can claim that
\begin{equation}\label{eq15}
\|\Phi(Q(u),\mu)-P_{\Omega}(Q(u))\|\leq 2\kappa_{\psi}\sqrt{l}\mu.
\end{equation}
Since $\Omega+\mathcal{B}(2\kappa_{\psi}\sqrt{l},\textbf{0})\subseteq D_H$,
by the continuous differentiability of $Q$, $H$ and $\Phi(\cdot,\mu)$, (i) and (ii) hold with $\kappa_G=2{c}_H\kappa_{\psi}\sqrt{l}$.

Note that
\begin{equation}\label{eq16}
\mathcal{G}'(z,\mu)=H'(w)_{w=\Phi(Q(z),\mu)}\Phi'(v,\mu)_{v=Q(z)}Q'(z).
\end{equation}
Recalling \eqref{eq12} and the definition of $\Phi$ in \eqref{eq13}, we get
\begin{equation}\label{eq17-2}
\|\Phi'(v,\mu)_{v=Q(z)}\|
=\|{\rm diag}(\phi'_i(v_i,\mu)_{v_i=Q_i(z)})\|\leq1.
\end{equation}
Then, the continuous differentiability of $H$ and $Q$ combining with the estimations in \eqref{eq15}, \eqref{eq16} and \eqref{eq17-2} gives that
$$\limsup_{z\rightarrow u,\mu\downarrow0}\|\mathcal{G}'(z,\mu)\|\leq
\|H'(w)_{w=P_{\Omega}(Q(u))}\|\|Q'(u)\|\leq c_Hc_Q=c,$$
which guarantees items (iii) and (iv).

Since $u_{\mu}$ and $u^*$ are the fixed points of $\mathcal{{G}}(\cdot,\mu)$ and $G$ on $D$, respectively, by (ii), we have
$$
\|u_{\mu}-u^*\|=
\|\mathcal{{G}}(u_{\mu},\mu)-G(u^*)\|
\leq
\|\mathcal{{G}}(u_{\mu},\mu)-G(u_{\mu})\|+\|G(u_{\mu})-G(u^*)\|
\leq \kappa_G\mu+c\|u_{\mu}-u^*\|,
$$
which gives the results in (v) by simple deduction.
We complete the proof.
\end{proof}

If $G$ satisfies Assumption \ref{ass-Q-H}, Proposition \ref{p4_3}-(iv) says that its smoothing approximations in \eqref{G-s} also own the contractive property when $\mu$ is sufficiently small.
Inspired by the proof of Proposition \ref{p4_3}, if $u^k$ is an approximate fixed point of $\mathcal{{G}}(u,\mu_k)$ with accuracy tolerance $\epsilon_k$, i.e.
$\|\mathcal{{G}}(u^k,\mu_k)-u^k\|\leq\epsilon_k,$
then we also have $\lim_{k\rightarrow\infty}u^k=u^*$, if $\lim_{k\rightarrow\infty}\mu_k=0$ and
$\lim_{k\rightarrow\infty}\epsilon_k=0$. Moreover, the error estimation in Proposition \ref{p4_3}-(v) holds
always no matter $\mathcal{G}(\cdot,\mu)$ is contractive or not.
Proposition \ref{p4_3}-(v) also gives an upper bound of
the error on the fixed point of $G$ and its smoothing approximation, which is defined by the parameter $\kappa_G$
coming from the structure of the smoothing approximation function and the contraction factor of $G$.
\begin{remark}\label{remark2}
Following the proof of Proposition \ref{p4_3}, condition $\Omega+\mathcal{B}(2\kappa_{\psi}\sqrt{l},\textbf{0})\subseteq D_H$ is only used to guarantee
$\Phi(Q(u),\mu)\in D_H$ for all $u\in D$ and $\mu\in (0,1]$. So, the statements (i) and (ii) in Proposition \ref{p4_3} hold for any $\mu\in(0,\tilde{\mu}]$ with parameter $\tilde{\mu}\in(0,1]$
satisfying $\Omega+\mathcal{B}(2\kappa_{\psi}\sqrt{l}\tilde{\mu},\textbf{0})\subseteq D_H$.
\end{remark}
\subsection{A modified Anderson(m) algorithm}
In this subsection, we will propose an Anderson acceleration algorithm for the nonsmooth fixed point problem \eqref{eq1-0} based on the smoothing approximation method. At first,
we study the new smoothing function of $\max\{t,0\}$ as follows, which has more desirable properties for solving \eqref{eq1-0}:
\begin{equation}\label{smoothing1}
\psi(t,\mu)=\left\{
\begin{aligned}
&0&&\mbox{if $t\leq0$}\\
&\frac{t^2}{2\mu}&&\mbox{if $0< t\leq\mu$}\\
&\frac{1}{4}(t-\mu)^2+t-\frac{1}{2}\mu&&\mbox{if $\mu<t\leq\mu+\sqrt{\mu}$}\\
&-\frac{1}{4}(t-\mu-2\sqrt{\mu})^2+t&&\mbox{if $\mu+\sqrt{\mu}< t\leq\mu+2\sqrt{\mu}$}\\
&t&&\mbox{if $t>\mu+2\sqrt{\mu}$.}\\
\end{aligned}
\right.
\end{equation}
Fig. \ref{fig-h} shows the smoothing function $\psi(\cdot,\mu)$ in \eqref{smoothing1} with different values of $\mu$, while Fig. \ref{fig-h2} shows the relationships of $\max\{t,0\}$ and its smoothing functions
defined in \eqref{eq1-2} and  \eqref{smoothing1}.
 Since $\psi$ in \eqref{smoothing1} is a smoothing function of $\max\{t,0\}$ with Definition \ref{smoothing}, the results in Proposition \ref{p4_3} also holds for $\mathcal{G}(u,\mu)$ defined in \eqref{G-s} with $\psi$ in \eqref{smoothing1}. In what follows, we will present some more desirable properties of $\psi$ in \eqref{smoothing1}.
\begin{figure}
\begin{center}
  \subfigure[]{
   \label{fig-h}
    \includegraphics[width=2.5in,height=2.2in]{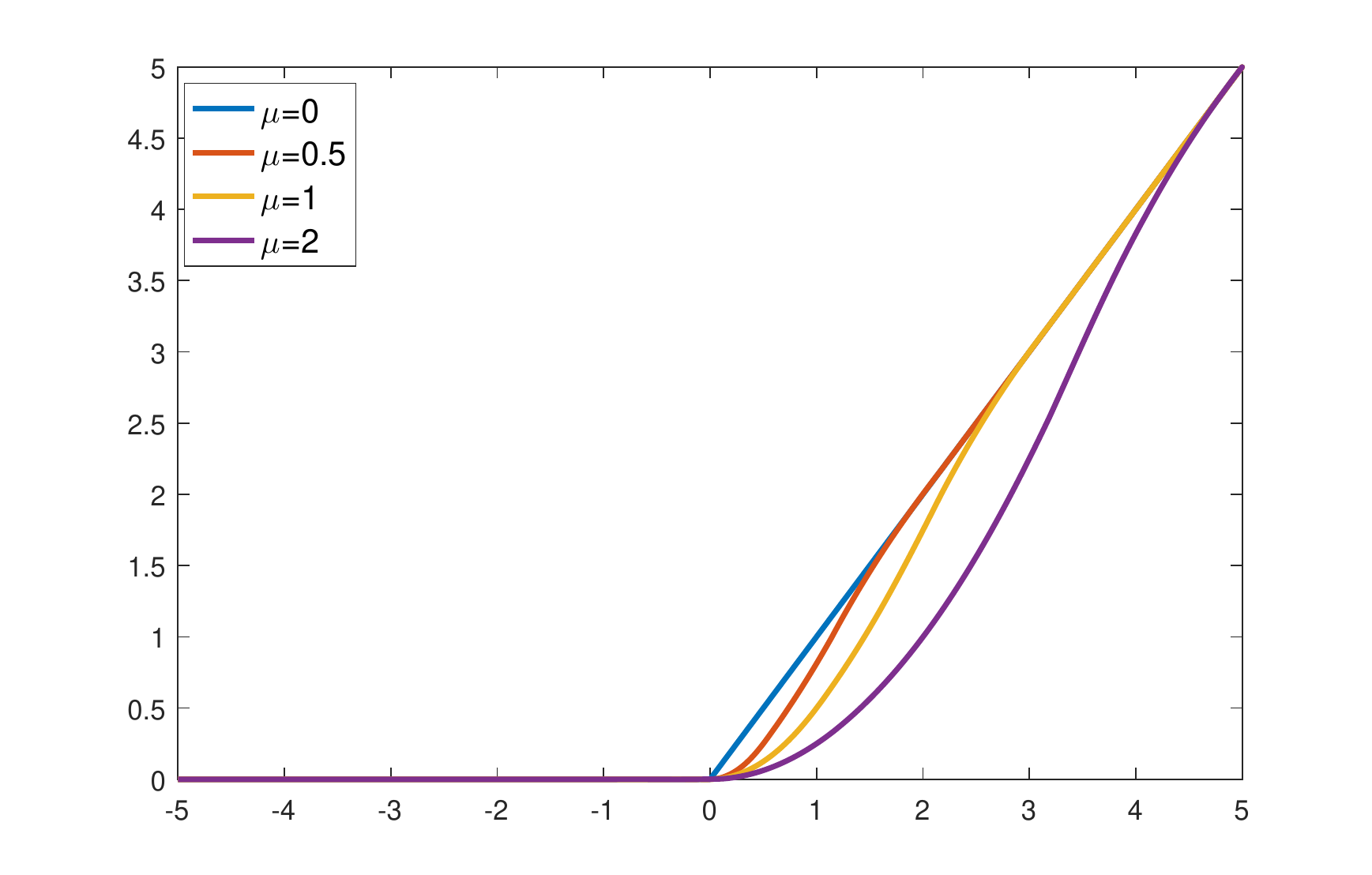}}
  \subfigure[ ]{
    \label{fig-h2}
    \includegraphics[width=2.5in,height=2.2in]{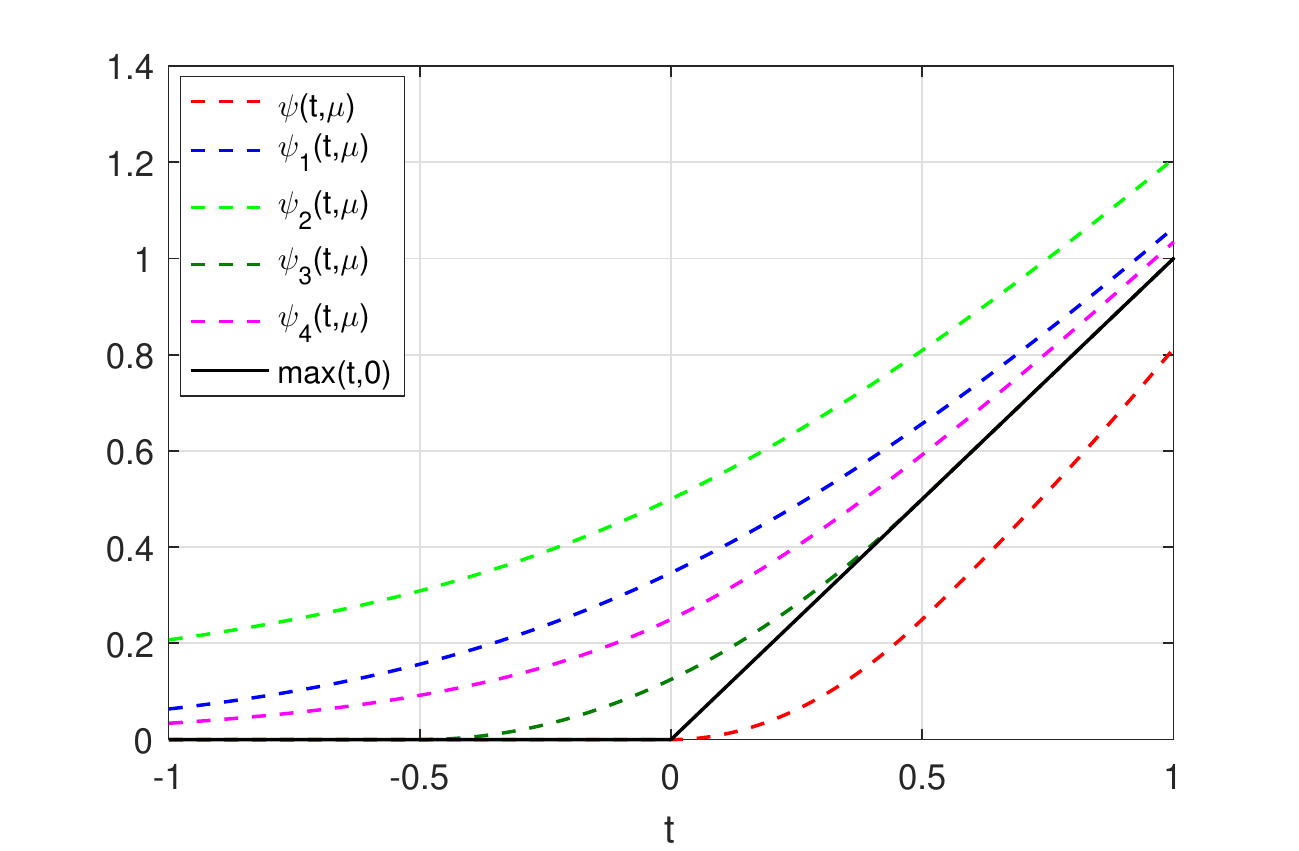}}
\caption{Smoothing functions of $\max\{t,0\}$: (a) $\psi(t,\mu)$ in \eqref{smoothing1} with different values of $\mu$; (b) $\max\{t,0\}$, $\psi(t,\mu)$ in \eqref{smoothing1} and the four smoothing functions $\psi_i(t,\mu)$ in \eqref{eq1-2} with $\mu=0.3$.}
\end{center}
\end{figure}
\begin{proposition}\label{prop2}
Function $\psi(t,\mu)$ in (\ref{smoothing1}) is continuously differentiable with respect to $t$ for any fixed $\mu\in(0,1]$ and satisfies the following properties.
\begin{itemize}
\item [{\rm (i)}] $|\psi(t,\mu)-\max\{t,0\}|\leq\frac{1}{2}\mu$, for any $t\in\mathbb{R}$ and $\mu\in(0,1]$.
\item [{\rm (ii)}]
$\psi(t,\mu)=\max\{t,0\}$ if $t\leq0$ or $t\geq\mu+2\sqrt{\mu}$.
\item [{\rm (iii)}] For any $\mu\in(0,1]$,
$\psi'(t,\mu)=0$ if $t\leq0$, $0\leq\psi'(t,\mu)\leq 1+\frac{1}{2}\sqrt{\mu}$ if $0<t<\mu+2\sqrt{\mu}$,
and
$\psi'(t,\mu)=1$ if $t\geq\mu+2\sqrt{\mu}$.
\end{itemize}
\end{proposition}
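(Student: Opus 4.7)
The proposition is essentially a piecewise verification about the five-branch function in \eqref{smoothing1}, so my plan is to organize the proof around the four breakpoints $t=0$, $t=\mu$, $t=\mu+\sqrt{\mu}$, $t=\mu+2\sqrt{\mu}$ and then check each assertion branch by branch.

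First I would establish the $C^1$ claim by matching one-sided values and one-sided derivatives at each breakpoint. A direct substitution shows that at $t=\mu$ both sides equal $\mu/2$ and both derivatives equal $1$; at $t=\mu+\sqrt{\mu}$ both sides equal $3\mu/4+\sqrt{\mu}$ and both derivatives equal $1+\tfrac12\sqrt{\mu}$; and at $t=0$ and $t=\mu+2\sqrt{\mu}$ the matches with the trivial branches are immediate. With continuous differentiability in hand, claim (ii) is a tautology: the first and last branches are literally $0$ and $t$.

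Next I would handle (iii). Differentiating each branch gives $\psi'(t,\mu)=0$ on $(-\infty,0]$, $t/\mu$ on $(0,\mu]$ (ranging over $[0,1]$), $1+(t-\mu)/2$ on $(\mu,\mu+\sqrt{\mu}]$ (ranging over $[1,1+\tfrac12\sqrt{\mu}]$), $1-(t-\mu-2\sqrt{\mu})/2$ on $(\mu+\sqrt{\mu},\mu+2\sqrt{\mu}]$ (ranging down from $1+\tfrac12\sqrt{\mu}$ to $1$), and $1$ on $(\mu+2\sqrt{\mu},\infty)$. All branches are monotone, so reading off the endpoint values gives the nonnegativity and the uniform bound $1+\tfrac12\sqrt{\mu}$ on $(0,\mu+2\sqrt{\mu})$.

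Finally, for (i), I would compute $\psi(t,\mu)-\max\{t,0\}$ on each interior branch: on $(0,\mu]$ it equals $t^2/(2\mu)-t$, a downward parabola bounded in absolute value by $\mu/2$ at $t=\mu$; on $(\mu,\mu+\sqrt{\mu}]$ it equals $(t-\mu)^2/4-\mu/2$, ranging from $-\mu/2$ at $t=\mu$ to $-\mu/4$ at $t=\mu+\sqrt{\mu}$; on $(\mu+\sqrt{\mu},\mu+2\sqrt{\mu}]$ it equals $-(t-\mu-2\sqrt{\mu})^2/4$, bounded in absolute value by $\mu/4$. Together with the exact equality on the outer branches, this yields the uniform error bound $\tfrac12\mu$. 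There is no conceptual obstacle here; the only care required is consistent sign bookkeeping in the two middle branches, where the approximation lies below $\max\{t,0\}$ and the worst case occurs at $t=\mu$. Once these estimates are assembled, properties (i)--(iii) follow directly, and the continuous differentiability that opens the statement is precisely what was already verified at the four breakpoints.
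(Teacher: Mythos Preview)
Your proposal is correct and follows essentially the same route as the paper: a branch-by-branch verification of the value and derivative of $\psi(\cdot,\mu)$ on the five pieces, with the paper merely stating ``by straightforward calculation'' for the $C^1$ matching and for (iii) while you spell out the breakpoint values explicitly. One small slip: on $(0,\mu]$ the error $t^2/(2\mu)-t$ is an \emph{upward}-opening parabola with its vertex at $t=\mu$ (not a downward one), but your bound $|t^2/(2\mu)-t|\le\mu/2$ is still correct since the function decreases from $0$ to $-\mu/2$ on that interval.
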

\begin{proof}
By the definition of $\psi$ in (\ref{smoothing1}), we obtain
$$\left|\psi(t,\mu)-\max\{t,0\}\right|=\left\{
\begin{aligned}
&0&&\mbox{if $t\leq0$}\\
&|{t^2}/{2\mu}-t|\leq\mu/2&&\mbox{if $0<t\leq\mu$}\\
&|(t-\mu)^2/4-\mu/2|\leq\mu/2&&\mbox{if $\mu<t\leq\mu+\sqrt{\mu}$}\\
&(t-\mu-2\sqrt{\mu})^2/4\leq\mu/4&&\mbox{if $\mu+\sqrt{\mu}< t\leq\mu+2\sqrt{\mu}$}\\
&0&&\mbox{if $t>\mu+2\sqrt{\mu}$,}\\
\end{aligned}
\right.$$
which implies the statements in (i) and (ii).

By straightforward calculation, we can verify
 that $\psi(t,\mu)$ is continuously differentiable with respect to $t$ for any fixed $\mu\in(0,1]$ and the estimation in (iii) holds.
\end{proof}

By Proposition \ref{prop2}-(ii), it holds that for any fixed $t\in\mathbb{R}$, there exists $\bar{\mu}>0$ such that $\psi(t,\mu)=\max\{t,0\}$, $\forall \mu\in(0,\bar{\mu}]$, which is
the main advantage of $\psi$ in \eqref{smoothing1} compared with the other four smoothing functions of $\max\{t,0\}$ in \eqref{eq1-2}. Following the proof of Proposition \ref{prop2}, we can further obtain the following
properties of $\phi_i$ in \eqref{eq20} with $\psi$ in \eqref{smoothing1}.
\begin{proposition}\label{prop3}
For any fixed $\mu\in(0,1]$, functions $\phi_i(\cdot,\mu)$ in \eqref{eq20} with $\psi$ in \eqref{smoothing1}, $i=1,2,\ldots,l$, are continuously differentiable and satisfy the following properties:
\begin{itemize}
\item [{\rm (i)}] $|\phi_i(t,\mu)-P_{[\underline{w}_i,\overline{w}_i]}(t)|\leq\frac{1}{2}\mu$,
for any $t\in\mathbb{R}$;
\item [{\rm (ii)}]
$\phi_i(t,\mu)=P_{[\underline{w}_i,\overline{w}_i]}(t)$ if $t\leq \underline{w}_i-\mu-2\sqrt{\mu}$ or
$\underline{w}_i\leq t\leq\overline{w}_i$ or $t\geq\overline{w}_i+\mu+2\sqrt{\mu}$;
\item [{\rm (iii)}] $|\phi_i'(t,\mu)|\leq 1$, for any $t\in\mathbb{R}$.
\end{itemize}
\end{proposition}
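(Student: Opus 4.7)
The plan is to exploit a single structural fact about $\phi_i$ that makes all three parts essentially routine: because $\underline{w}_i<\overline{w}_i$, the two smoothing arguments $\underline{w}_i-t$ and $t-\overline{w}_i$ sum to a negative number, so at most one of them can be positive for any given $t$. Since Proposition \ref{prop2}(ii)-(iii) guarantees that both $\psi(\cdot,\mu)$ and $\psi'(\cdot,\mu)$ agree with $\max\{\cdot,0\}$ and its subgradient on $(-\infty,0]$ (in particular, both vanish on $(-\infty,0]$), each of $\phi_i$, $\phi_i-P_{[\underline{w}_i,\overline{w}_i]}$, and $\phi_i'$ reduces to a single ``active'' smoothing contribution.

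For part (i), I would write
\[
\phi_i(t,\mu) - P_{[\underline{w}_i,\overline{w}_i]}(t) = \bigl[\psi(\underline{w}_i-t,\mu) - \max\{\underline{w}_i-t,0\}\bigr] - \bigl[\psi(t-\overline{w}_i,\mu) - \max\{t-\overline{w}_i,0\}\bigr]
\]
and split into three cases: $t<\underline{w}_i$, $\underline{w}_i\le t\le\overline{w}_i$, and $t>\overline{w}_i$. In each case, the disjointness observation forces one bracket to vanish identically, and the remaining bracket is bounded by $\tfrac{1}{2}\mu$ by Proposition \ref{prop2}(i). The bound follows.

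For part (ii), I would use the same three-case partition. When $\underline{w}_i\le t\le\overline{w}_i$, both arguments of $\psi$ are nonpositive, so Proposition \ref{prop2}(ii) yields $\phi_i(t,\mu)=t=P_{[\underline{w}_i,\overline{w}_i]}(t)$. When $t\le\underline{w}_i-\mu-2\sqrt{\mu}$, the argument $\underline{w}_i-t\ge\mu+2\sqrt{\mu}$ is in the regime where Proposition \ref{prop2}(ii) gives $\psi(\underline{w}_i-t,\mu)=\underline{w}_i-t$, while $t-\overline{w}_i<0$ still kills the second term; hence $\phi_i(t,\mu)=\underline{w}_i$. The case $t\ge\overline{w}_i+\mu+2\sqrt{\mu}$ is symmetric.

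For part (iii), differentiation gives
\[
\phi_i'(t,\mu) = -\psi'(\underline{w}_i-t,\mu) + 1 - \psi'(t-\overline{w}_i,\mu).
\]
By Proposition \ref{prop2}(iii), each $\psi'$ term is nonnegative and vanishes whenever its argument is nonpositive, so the disjointness observation leaves at most one active term, lying in $[0,1+\tfrac{1}{2}\sqrt{\mu}]$. Hence $\phi_i'(t,\mu)\in[-\tfrac{1}{2}\sqrt{\mu},\,1]$, and since $\mu\in(0,1]$ forces $\tfrac{1}{2}\sqrt{\mu}\le\tfrac{1}{2}\le1$, we get $|\phi_i'(t,\mu)|\le 1$. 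There is no real obstacle anywhere in this proof; the only thing to be careful about is remembering that the disjointness observation needs $\underline{w}_i<\overline{w}_i$ (assumed in \eqref{eq-omiga}) and that $\psi'$ can exceed $1$ on the smoothing region, so the $\mu\le 1$ assumption is what salvages the bound in (iii).
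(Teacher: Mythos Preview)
Your proof is correct and, for parts (i) and (ii), proceeds essentially as the paper does: both arguments hinge on the observation that the two smoothing ``windows'' are disjoint (the paper phrases this as at most one of $\psi(\underline{w}_i-t,\mu)\neq\max\{\underline{w}_i-t,0\}$ and $\psi(t-\overline{w}_i,\mu)\neq\max\{t-\overline{w}_i,0\}$ can occur), so each claim reduces to a single application of Proposition~\ref{prop2}.

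For part (iii) the paper takes a different route: it writes out $|\phi_i'(t,\mu)|$ explicitly across nine sub-intervals of $\mathbb{R}$ (determined by the breakpoints of the piecewise definition of $\psi$ in \eqref{smoothing1}) and checks the bound case by case. Your argument is more conceptual---you reuse the disjointness observation together with the range estimate $\psi'\in[0,1+\tfrac12\sqrt{\mu}]$ from Proposition~\ref{prop2}(iii) to conclude $\phi_i'\in[-\tfrac12\sqrt{\mu},1]$, and then invoke $\mu\le 1$. This is shorter and avoids the tedious enumeration; the paper's explicit computation, on the other hand, exhibits the exact piecewise form of $\phi_i'$ (and in particular shows that the lower bound $-\tfrac12\sqrt{\mu}$ you obtain is in fact attained), information that is not needed for the proposition itself but may be of independent interest.
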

\begin{proof}
By Proposition \ref{prop2}-(ii), we have
$$\begin{aligned}
&\psi(\underline{w}_i-t,\mu)=\max\{\underline{w}_i-t,0\}&&\mbox{if $t\geq\underline{w}_i$ or
$t\leq\underline{w}_i-\mu-2\sqrt{\mu}$},\\
&\psi(t-\overline{w}_i,\mu)=\max\{t-\overline{w}_i,0\}&&\mbox{if $t\leq\overline{w}_i$ or
$t\geq\overline{w}_i+\mu+2\sqrt{\mu}$}.
\end{aligned}$$
Then, for any $\mu\in(0,1]$ and $t\in \mathbb{R}$,
at most one of $\psi(\underline{w}_i-t,\mu)=\max\{\underline{w}_i-t,0\}$ and $\psi(t-\overline{w}_i,\mu)=\max\{t-\overline{w}_i,0\}$ holds. Then,
the results (i) and (ii) in Proposition \ref{prop2} imply items (i) and (ii) in this proposition.

In what follows, we consider the estimation in item (iii). From \eqref{smoothing1}, we have
\begin{equation}\label{eq5}
\begin{aligned}
|\phi_i'(t,\mu)|=&|\psi'(\underline{w}_i-t,\mu)+1-\psi'(t-\overline{w}_i,\mu)|\\
=&\left\{
\begin{aligned}
&0&&\mbox{if $t\leq\underline{w}_i-\mu-2\sqrt{\mu}$}\\
&|\underline{w}_i-t-\mu-2\sqrt{\mu}|/2\leq \sqrt{\mu}/2&&
\mbox{if $\underline{w}_i-\mu-2\sqrt{\mu}\leq t<\underline{w}_i-\mu-\sqrt{\mu}$}\\
&|t+\mu-\underline{w}_i|/2\leq\sqrt{\mu}/2&&
\mbox{if $\underline{w}_i-\mu-\sqrt{\mu}<t<\underline{w}_i-\mu$}\\
&|t-\underline{w}_i+\mu|/{\mu}\leq1&&\mbox{if $\underline{w}_i-\mu\leq t<\underline{w}_i$}\\
&1&&\mbox{if $\underline{w}_i\leq t<\overline{w}_i$}\\
&|\mu+\overline{w}_i-t|/{\mu}\leq1&&\mbox{if $\overline{w}_i\leq t<\overline{w}_i+\mu$}\\
&|t-\overline{w}_i-\mu|/2\leq \sqrt{\mu}/2&&\mbox{if $\overline{w}_i+\mu\leq t<\overline{w}_i+\mu+\sqrt{\mu}$}\\
&|t-\overline{w}_i-\mu-2\sqrt{\mu}|/2\leq \sqrt{\mu}/2&&\mbox{if $\overline{w}_i+\mu+\sqrt{\mu}\leq t<\overline{w}_i+\mu+2\sqrt{\mu}$}\\
&0&&\mbox{if $t\geq\overline{w}_i+\mu+2\sqrt{\mu}$.}\\
\end{aligned}\right.\end{aligned}
\end{equation}
Thus, (iii) holds.
\end{proof}

In what follows, we will use the smoothing function of $\max\{t,0\}$ in \eqref{smoothing1} to construct a smoothing approximation of $P_{\Omega}(v)$ on $\mathbb{R}^l$, which is also with the formulation in \eqref{eq13}. Then, we can give a
smoothing approximation of $G$ in \eqref{eq1-0} by the formulation of \eqref{G-s} with \eqref{smoothing1}.

Set $\varpi_1=\min\{3,\underline{w}_i-Q_i(u^*): i\in\{i:Q_i(u^*)<\underline{w}_i\}\}$, $\varpi_2=\min\{3,Q_i(u^*)-\overline{w}_i:i\in\{i:Q_i(u^*)>\overline{w}_i\}\}$, and
by Assumption \ref{ass-Q-H}-(ii), denote $\eta\in(0,1]$ the parameter such that
\begin{equation}\label{eq-eta}
 \Omega+\mathcal{B}({\sqrt{l}\eta}/{2},\textbf{0})\subseteq D_H.
\end{equation}
Then, we define parameter $\bar{\mu}$ by
\begin{equation}\label{eq_mu}
\bar{\mu}=\min\{\eta,(\varpi_1/3)^2,(\varpi_2/3)^2\}.
\end{equation}

\begin{theorem}\label{thm1} Suppose Assumption \ref{ass-Q-H} holds. Besides the properties in Proposition \ref{p4_3},
function $\mathcal{G}(u,\mu)$ in \eqref{G-s} with $\psi$ defined in \eqref{smoothing1} owns the following properties.
\begin{itemize}
\item [{\rm (i)}] For any fixed $\mu\in(0,\eta]$, $\mathcal{G}(\cdot,\mu)$ is a contractive mapping on $D$
    with contraction factor no larger than ${c}$ in Assumption \ref{ass-Q-H}.
\item [{\rm (ii)}]
$\|\mathcal{G}(u,\mu)-G(u)\|\leq\kappa\mu$ for all $u\in D$ and $\mu\in(0,\eta]$ with $\kappa=c_H{\sqrt{l}}/{2}$;
\item [{\rm (iii)}] $\mathcal{G}(u^*,\mu)=G(u^*)=u^*$, $\forall \mu\in(0,\bar{\mu}]$, where $\bar{\mu}$ is defined by \eqref{eq_mu}.
\end{itemize}
\end{theorem}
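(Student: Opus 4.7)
The plan is to handle the three claims in the order (ii), (i), (iii), since each invokes a different property from Proposition \ref{prop3}. For (ii), I would apply Proposition \ref{prop3}-(i) componentwise at $t = Q_i(u)$ to obtain $\|\Phi(Q(u),\mu) - P_{\Omega}(Q(u))\| \le \tfrac{1}{2}\sqrt{l}\mu$. Since $\mu \le \eta$, the defining property \eqref{eq-eta} of $\eta$ places $\Phi(Q(u),\mu)$ inside $\Omega + \mathcal{B}(\sqrt{l}\eta/2,\textbf{0}) \subseteq D_H$, and the Lipschitz constant $c_H$ of $H$ on $D_H$ then yields $\|\mathcal{G}(u,\mu) - G(u)\| \le c_H\cdot\tfrac{1}{2}\sqrt{l}\mu = \kappa\mu$.

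For (i), I would use Proposition \ref{prop3}-(iii) to observe that $\Phi'(v,\mu) = \mathrm{diag}(\phi'_i(v_i,\mu))$ has operator norm at most $1$ uniformly on $\mathbb{R}^l \times (0,1]$. Since $\mathbb{R}^l$ is convex, the mean value theorem gives the nonexpansive bound $\|\Phi(v,\mu) - \Phi(\tilde v,\mu)\| \le \|v - \tilde v\|$. Combining this with the Lipschitz constant $c_H$ of $H$ (applicable on the convex set $\Omega + \mathcal{B}(\sqrt{l}\eta/2,\textbf{0})$ that contains the image of $\Phi(Q(\cdot),\mu)$ by the argument in (ii)) and the Lipschitz constant $c_Q$ of $Q$ on $D$, I obtain
\[
\|\mathcal{G}(u,\mu)-\mathcal{G}(v,\mu)\| \le c_H \|\Phi(Q(u),\mu)-\Phi(Q(v),\mu)\| \le c_H \|Q(u)-Q(v)\| \le c\|u-v\|,
\]
which is the desired contraction with factor at most $c$ on $D$ for every $\mu \in (0,\eta]$.

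For (iii), the decisive tool is the exact-reproduction clause of Proposition \ref{prop3}-(ii): $\phi_i(t,\mu) = P_{[\underline{w}_i,\overline{w}_i]}(t)$ whenever $t$ is either in $[\underline{w}_i,\overline{w}_i]$ or at distance at least $\mu + 2\sqrt{\mu}$ outside it. I would check this componentwise at $t = Q_i(u^*)$. The case $\underline{w}_i \le Q_i(u^*) \le \overline{w}_i$ is immediate. If $Q_i(u^*) < \underline{w}_i$, the definition of $\varpi_1$ gives $\underline{w}_i - Q_i(u^*) \ge \varpi_1$; combining $\mu \le \bar{\mu} \le (\varpi_1/3)^2$ with the built-in bound $\varpi_1 \le 3$ (so that $(\varpi_1/3)^2 \le \varpi_1/3$), both $\mu$ and $\sqrt{\mu}$ lie in $[0,\varpi_1/3]$, giving $\mu + 2\sqrt{\mu} \le \varpi_1 \le \underline{w}_i - Q_i(u^*)$. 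The symmetric case $Q_i(u^*) > \overline{w}_i$ is handled identically using $\varpi_2$. Hence $\Phi(Q(u^*),\mu) = P_{\Omega}(Q(u^*))$ for every $\mu \in (0,\bar{\mu}]$, and therefore $\mathcal{G}(u^*,\mu) = H(P_{\Omega}(Q(u^*))) = G(u^*) = u^*$.

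The only delicate piece is the scalar arithmetic underpinning (iii): the choice $\bar{\mu} = \min\{\eta,(\varpi_1/3)^2,(\varpi_2/3)^2\}$ is calibrated precisely so that, together with the capped bound $\varpi_k \le 3$ baked into the definition of each $\varpi_k$, the inequality $\mu + 2\sqrt{\mu} \le \varpi_k$ holds and activates the exactness clause of Proposition \ref{prop3}-(ii). Everything else reduces to the componentwise structure of $\Phi$ and the Lipschitz properties already recorded in Assumption \ref{ass-Q-H}.
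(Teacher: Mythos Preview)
Your proof is correct and follows essentially the same route as the paper's: both arguments use Proposition~\ref{prop3}-(i) to obtain $\|\Phi(Q(u),\mu)-P_\Omega(Q(u))\|\le\tfrac12\sqrt{l}\mu$ and hence $\Phi(Q(u),\mu)\in D_H$, then combine the nonexpansiveness of $\Phi$ from Proposition~\ref{prop3}-(iii) with the Lipschitz constants $c_H,c_Q$ for (i)--(ii), and for (iii) verify componentwise via Proposition~\ref{prop3}-(ii) that $\mu+2\sqrt{\mu}\le\varpi_k$ whenever $\mu\le\bar\mu$. Your explicit use of the cap $\varpi_k\le 3$ to derive $\mu\le(\varpi_k/3)^2\le\varpi_k/3$ is in fact a bit cleaner than the paper's corresponding line.
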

\begin{proof}
By Proposition \ref{prop3}-(i), it holds
\begin{equation}\label{eq18}
\|\Phi(Q(u),\mu)-P_{\Omega}(Q(u))\|\leq{\sqrt{l}}\mu/{2}.
\end{equation}
Then, $\Phi(Q(u),\mu)\in D_H$ for all $u\in D$ and $\mu\in(0,\eta]$ can be guaranteed by
the condition $\Omega+\mathcal{B}({\sqrt{l}}\eta/{2},\textbf{0})\subseteq D_H$.

(i)
Using the Lipschitz property of $H$ and $Q$ again, for any $u,v\in D$ and $\mu\in(0,\eta]$, we obtain
$$\begin{aligned}
\|\mathcal{G}(u,\mu)-\mathcal{G}(v,\mu)\|\leq&c_H\|\Phi(Q(u),\mu)-\Phi(Q(v),\mu)\|\\
\leq&c_H\|Q(u)-Q(v)\|
\leq c_Hc_Q\|u-v\|
={c}\|u-v\|,
\end{aligned}$$
where the second inequality follows from Proposition \ref{prop3}-(iii).
Thus, for any $\mu\in(0,\eta]$, $\mathcal{G}(u,\mu)$ is a contractive mapping on $D$ with factor no larger than ${c}$.

(ii) By the Lipschitz property of $H$ on $D_H$ and $\Phi(Q(u),\mu)\in D_H$ for all $u\in D$ and $\mu\in(0,\eta]$, it holds
\[\begin{aligned}
\|\mathcal{G}(u,\mu)-G(u)\|=&\|H(\Phi(Q(u),\mu))-H(\max\{Q(u),0\})\|\\
\leq& c_H\|\Phi(Q(u),\mu)-\max\{Q(u),0\}\|\leq \kappa\mu,
\end{aligned}\]
where the last inequality follows from \eqref{eq18} with $\kappa={{\sqrt{l}}c_H/{2}}$.

(iii) Denote $I_1=\{i:Q_i(u^*)<\underline{w}_i\}$, $I_2=\{i:\underline{w}_i\leq Q_i(u^*)\leq
\overline{w}_i\}$ and $I_3=\{i:Q_i(u^*)>\overline{w}_i\}$. First, we can easily find that
$$\phi_i(Q_i(u^*),\mu)=Q_i(u^*)=P_{[\underline{w}_i,\overline{w}_i]}(Q_i(u^*)),\quad \forall i\in I_2.$$
Next, for $i\in I_1$, by the definition of $\varpi_1$ and $\bar{\mu}\leq(\varpi_1/3)^2\leq1$, we have
$$Q_i(u^*)\leq\underline{w}_i-\varpi_1=\underline{w}_i-3\sqrt{\bar{\mu}}\leq\underline{w}_i-
\mu-2\sqrt{\mu},\quad \forall i\in I_1,\,0<\mu\leq\bar{\mu}, $$
by Proposition \ref{prop3}-(ii), which implies
\begin{equation}\label{eq25}
\phi_i(Q_i(u^*),\mu)=P_{[\underline{w}_i,\overline{w}_i]}(Q_i(u^*)),\quad \forall i\in I_1,\,0<\mu\leq\bar{\mu}.
\end{equation}
Similarly, for $i\in I_3$, we obtain
$$Q_i(u^*)\geq\overline{w}_i+\mu+2\sqrt{\mu},\quad \forall i\in I_3,\,0<\mu\leq\bar{\mu}, $$
which gives \eqref{eq25} for $i\in I_3$. Thus, for any $\mu\in(0,\bar{\mu}]$, we have $\Phi(Q(u^*),\mu)=P_{\Omega}(Q(u^*))$ and thus
$\mathcal{G}(u^*,\mu)=G(u^*)=u^*.$
We complete the proof.
\end{proof}

Inspired by Theorem \ref{thm1}-(iii), when $\mu\leq\bar{\mu}$ with $\bar{\mu}$ defined in \eqref{eq_mu}, $u^*$ is also the fixed point of $\mathcal{{G}}(u,\mu)$, and from Theorem \ref{thm1}-(i), we further have
\begin{equation}\label{smoothing2}
(1-c)\|u-u^*\|\leq \|\mathcal{F}(u,\mu)\|\leq (1+c)\|u-u^*\|,\quad \forall u\in D,\,\mu\in(0,\bar{\mu}],
\end{equation}
where $\mathcal{F}(u,\mu)=\mathcal{G}(u,\mu)-u$.
\begin{remark}\label{remark4}
 Proposition \ref{prop3}-(ii) shows that ${\cal G}(u, \mu)=G(u)$, for any $\mu \in (0,1]$ and $u\in D$ satisfying $Q(u)\in\Omega$. Thus,
if $u^*$ is the fixed point of ${\cal G}(\cdot, \mu)$ for a given $\mu\in(0,1]$ and
$Q(u^*)\in\Omega$, then we can justify that $u^*$ is also the fixed point of $G$.
\end{remark}
\begin{algorithm}[ht]
\caption{s-Anderson(m)}\label{algo-o1}
\textbf{Choose} $u_0\in D$ and a positive integer $m$.

\textbf{Set} parameters $\sigma_1,\sigma_2\in(0,1)$, $\gamma>0$ and a sufficiently small positive parameter $\epsilon<\gamma\|F(u_0)\|^2$.

\textbf{Let}
$F_0=G(u_0)-u_0$, $\mu_0=\gamma\|F_0\|^2$,
${\cal F}_0={\cal G}(u_0,\mu_0)-u_0$ and $u_1={\cal G}(u_0,\mu_0)$.

\textbf{for $k=1,2,...$ do}

\qquad  set $F_k=G(u_k)-u_k$,

\qquad if $\|F_k\|\leq\sigma_1\|F_{k-1}\|$, then let
$$\mu_{k}=\mu_{k-1},$$
\qquad otherwise, let
$$\mu_{k}=\max\{\epsilon,\sigma_2\mu_{k-1}\};$$

\qquad  set ${\cal F}_k={\cal G}(u_k,\mu_k)-u_k$;

\qquad choose $m_k=\min\{m,k\}$;

\qquad solve
\begin{equation}\label{eq7}
\min\,\,\left\|\sum_{j=0}^{m_k}\alpha_j{{\cal F}}_{k-m_k+j}\right\|\quad \quad
\mbox{s.t.}\,\sum_{j=0}^{m_k}\alpha_j=1
\end{equation}
\qquad to find a solution $\{\alpha_j^k:j=0,\ldots,m_k\}$, and set
\begin{equation}\label{uk}
u_{k+1}=\sum_{j=0}^{m_k}\alpha_j^k{{\cal G}}(u_{k-m_k+j},\mu_{k-m_k+j});
\end{equation}
\textbf{end for}
\end{algorithm}

By Theorem \ref{thm1}, when we use \eqref{G-s}
with (\ref{smoothing1}) as the smoothing approximation of $G$,
$\mathcal{G}(\cdot,\mu)$ is contractive
and $u_{\mu}=u^*$ for $\mu\in(0,\bar{\mu}]$, where $u_{\mu}$ is the fixed point of $\mathcal{G}(\cdot,\mu)$. Then, we can apply Anderson(m) or EDIIS(m) to find a fixed point of $G$ by using ${\cal G}(\cdot, \mu)$ in the algorithms.
{If $u_0$ is sufficiently close to $u^*$, then $\mu_0:=\gamma\|F(u_0)\|^2<\bar\mu$. In such case,  we can let $\mu_k:=\mu_0$ for all $k$. However, $u^*$ is unknown, and the value of $\bar{\mu}$ in (3.14) is often difficult to be evaluated in practice. Thus, we use an updating scheme on $\mu_k$ in Algorithm 3.1 to improve the ability and performance of the Anderson acceleration methods for nonsmooth fixed point problems.}
In s-Anderson(m), we replace $G(u)$ in Anderson(m) by ${\cal G}(u,\mu)$ and update $\mu$ step by step. {The strategy for updating $\mu_k$ in Algorithm 3.1 is based on the reduction of the norms of the residual function at $u_k$ and $u_{k-1}$.
If $\|F_k\|\leq\sigma_1\|F_{k-1}\|$, then it means that using $\mu_{k-1}$ can reduce the norm of the residual function at $u_k$ sufficiently. Hence we let $\mu_k=\mu_{k-1}$ for the next iteration. Otherwise, we
set $\mu_k = \max\{\epsilon, \sigma_2 \mu_{k-1}\}.$}

Same as the condition on the coefficients $\{\alpha_j^k:j=1,\ldots,m_k\}$
used in \cite{ChenKelley2015,TothKelley2015}, we need the following assumption on them in \eqref{eq7}.
\begin{assumption}\label{ass2-2}
There exists an $M_{\alpha}\geq 1$ such that $\sum_{j=0}^{m_k}|\alpha_j^k|\leq M_{\alpha}$ holds for all $k\geq1$.
\end{assumption}

Before proving the local r-linear convergence of s-Anderson(m), we need predefine some necessary parameters used in the forthcoming proof and give some preliminary analysis.
\begin{itemize}
\item $a$:
Combining \eqref{eq16}, \eqref{eq17-2} with the Lipschitz property of $Q'(u)$, ${\rm diag}(\phi'(Q_i(u),\mu))$ and $H'(\Phi(Q(u),\mu))$ on $D$, there exists a constant $a>0$ such that $\mathcal{G}'(u,\mu)$ is Lipschitz continuous on $\mathcal{B}({\delta},u^*)$ with constant $a$. This means,
\begin{equation}\label{eq-G}
{\mathcal{G}}(u,\mu)={\mathcal{G}}(u^*,\mu)+{\mathcal{G}}'(u^*,\mu)(u-u^*)+\Delta_{u},\quad \forall u\in\mathcal{B}({\delta},u^*),\,\mu\in[\epsilon,\eta],
\end{equation}
where $\|\Delta_{u}\|\leq\frac{1}{2}a\|u-u^*\|^2$.
\item $\delta_{1}$: Since $\mathcal{G}(u,\mu)$ is Lipschitz continuous, from Theorem 2.2 in \cite{ChenKelley2015}, there exists $\delta_{1}\in(0,\delta]$ such that if $\|u_0-u^*\|\leq{\delta_{1}}$, we have the r-linear convergence of Anderson(m) on solving $\mathcal{F}(u,\hat{\mu}):=\mathcal{G}(u,\hat{\mu})-u=0$ with any $\hat{\mu}\in[\epsilon,\bar{\mu}]$, that is
\begin{equation}\label{eq_mu2}
\limsup_{k\rightarrow\infty}\left(\frac{\|\mathcal{F}(u_k,\hat{\mu})\|}{\|\mathcal{F}(u_0,\hat{\mu})\|}
\right)^{{1}/{k}}\leq {c},
\end{equation}
where ${c}$ is a contraction factor of $\mathcal{G}(u,\hat{\mu})$ on $D$ by Theorem \ref{thm1}-(i).
\item $\delta_0$: Let
\begin{equation}\label{eq-u0}
\delta_0:=\min\{\delta_1, \frac{\sqrt{\bar{\mu}}}{\sqrt{\gamma}(1+c)},\frac{\sqrt{\eta}}{\sqrt{\gamma}(1+c)},
\frac{(1-c)\delta_1}{M_{\alpha}(1+c)},
\frac{1-c}{\varpi}\},
\end{equation}
where $\bar{\mu}$ is defined in \eqref{eq_mu} and $\varpi=\frac{a(M_{\alpha}^2+M_{\alpha})(1+c)+2M_{\alpha}\sqrt{l}c_H\gamma(1+c)^2(1-c)}{2(1-c)^2}$.
\end{itemize}

\begin{lemma}\label{lemma1}
If $\|u_0-u^*\|\leq\delta_0$, then {for the sequences $\{\mu_k\}$, $\{u^k\}$ and $\{\mathcal{F}_k\}$ generated by s-Anderson(m) in Algorithm \ref{algo-o1}}, it holds that
\begin{eqnarray}
\mu_k\leq\bar{\mu},\quad\Omega+\mathcal{B}(\sqrt{l}\mu_k/2,\textbf{0})\subseteq D_H,\quad
u_k\in \mathcal{B}(\delta_1,u^*)\quad\mbox{and}\quad \|\mathcal{F}_k\|\leq\|\mathcal{F}_0\|.\label{eq-th2-2}
\end{eqnarray}
\end{lemma}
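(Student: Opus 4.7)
The plan is to argue by strong induction on $k$. At each step I simultaneously verify four claims: (a) $\mu_k\le\bar\mu$, (b) $\Omega+\mathcal B(\sqrt{l}\mu_k/2,\mathbf 0)\subseteq D_H$, (c) $u_k\in\mathcal B(\delta_1,u^*)$, and (d) $\|\mathcal F_k\|\le\|\mathcal F_0\|$.

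For the base case $k=0$, the hypothesis $\|u_0-u^*\|\le\delta_0$ and \eqref{smoothing3} yield $\|F_0\|\le(1+c)\delta_0$, so $\mu_0=\gamma\|F_0\|^2\le\gamma(1+c)^2\delta_0^2$. The second and third entries in the min defining $\delta_0$ in \eqref{eq-u0} then force $\mu_0\le\min\{\bar\mu,\eta\}$. The set inclusion then follows from \eqref{eq-eta}, while $u_0\in\mathcal B(\delta_1,u^*)$ and $\|\mathcal F_0\|\le\|\mathcal F_0\|$ are immediate since $\delta_0\le\delta_1$.

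For the inductive step, suppose (a)-(d) hold for $0\le j\le k-1$. The prescribed update rule makes $\mu_j$ non-increasing, so $\mu_k\le\mu_{k-1}\le\bar\mu$ gives (a), and (b) follows from $\mu_k\le\eta$. For (c), the Anderson update \eqref{uk} combined with Theorem \ref{thm1}(iii) (hence $\mathcal G(u^*,\mu_{j'})=u^*$ for $\mu_{j'}\le\bar\mu$), the contractivity of Theorem \ref{thm1}(i), Assumption \ref{ass2-2} and \eqref{smoothing2} give
\[\|u_k-u^*\|\le cM_\alpha\max_{j'}\|u_{j'}-u^*\|\le\frac{cM_\alpha\|\mathcal F_0\|}{1-c}\le\frac{M_\alpha(1+c)\delta_0}{1-c}\le\delta_1\]
by the fourth entry in \eqref{eq-u0}.

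Claim (d) is the crux. I plan to start from the identity $\mathcal F_k=\sum_j\alpha_j^{k-1}\bigl[\mathcal G(u_k,\mu_k)-\mathcal G(u_{j'},\mu_{j'})\bigr]$ (using $\sum\alpha_j^{k-1}=1$ and the definition of $u_k$), and split each summand as
\[[\mathcal G(u_k,\mu_k)-\mathcal G(u_{j'},\mu_k)]+[\mathcal G(u_{j'},\mu_k)-\mathcal G(u_{j'},\mu_{j'})].\]
The second bracket is bounded by $\kappa(\mu_k+\mu_{j'})\le 2\kappa\mu_0$ via Theorem \ref{thm1}(ii). For the first, Taylor's expansion \eqref{eq-G} around $u^*$ rewrites it as $\mathcal G'(u^*,\mu_k)(u_k-u_{j'})+\Delta_{u_k}-\Delta_{u_{j'}}$; after weighted summation, the linear part becomes $\mathcal G'(u^*,\mu_k)\sum_j\alpha_j^{k-1}(u_k-u_{j'})=\mathcal G'(u^*,\mu_k)\sum_j\alpha_j^{k-1}\mathcal F_{j'}$ because $u_k-u_{j'}=\mathcal G(u_{j'},\mu_{j'})-u_{j'}+\dots$ telescopes through $u_k=\sum_j\alpha_j^{k-1}(u_{j'}+\mathcal F_{j'})$. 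The key ingredient is Anderson's optimality of \eqref{eq7}: picking $\alpha_j=\delta_{j,m_k}$ as a feasible competitor shows $\|\sum_j\alpha_j^{k-1}\mathcal F_{j'}\|\le\|\mathcal F_{k-1}\|$, so the leading linear piece is bounded by $c\|\mathcal F_{k-1}\|$ (not the much larger $cM_\alpha(1+c)/(1-c)\|\mathcal F_0\|$ one would naively get). Collecting the quadratic $\Delta$ remainders gives $\tfrac{a}{2}(M_\alpha^2+M_\alpha)\max_{j'}\|u_{j'}-u^*\|^2$, and converting $\max\|u_{j'}-u^*\|^2\le\|\mathcal F_0\|^2/(1-c)^2$ together with $\mu_0\le\gamma\|F_0\|^2\le\gamma(1+c)^2\|u_0-u^*\|^2$ produces
\[\|\mathcal F_k\|\le c\|\mathcal F_{k-1}\|+\Bigl[\tfrac{a(M_\alpha^2+M_\alpha)(1+c)}{2(1-c)^2}+M_\alpha c_H\sqrt{l}\gamma(1+c)^2\Bigr]\|u_0-u^*\|\,\|\mathcal F_0\|.\]
Using the induction hypothesis $\|\mathcal F_{k-1}\|\le\|\mathcal F_0\|$ and recognizing the bracketed constant as $\varpi(1-c)$ from \eqref{eq-u0}, the condition $\|u_0-u^*\|\le\delta_0\le(1-c)/\varpi$ makes the right-hand side at most $\|\mathcal F_0\|$, closing the induction.

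The main obstacle is exactly the cancellation in the last paragraph: without exploiting the Anderson minimality $\|\sum\alpha_j^{k-1}\mathcal F_{j'}\|\le\|\mathcal F_{k-1}\|$ to trade the naive linear factor $cM_\alpha(1+c)/(1-c)$ for $c$ times the previous residual, the crude bound could exceed $1$, and no choice of $\delta_0$ would close the induction; organizing the decomposition so that only quadratic (controlled by $a$) and smoothing-defect (controlled by $\gamma$ and $\kappa$) remainders survive is what forces the exact form of $\varpi$.
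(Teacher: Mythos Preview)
Your proposal is correct and takes essentially the same approach as the paper: strong induction, with (a)--(b) from monotonicity of $\{\mu_k\}$, (c) from a direct contractivity bound, and (d) from the three-part split into the Anderson-optimal linear piece $c\|\mathcal F_{k-1}\|$, a Taylor quadratic remainder controlled by $a$, and a smoothing defect controlled by $\kappa\mu_0$. The only organizational difference is that the paper inserts the intermediate point $\mathcal G\bigl(\sum_j\alpha_j^K u_{K-m_K+j},\mu_{K+1}\bigr)$ and applies contractivity of $\mathcal G(\cdot,\mu_{K+1})$ directly to obtain the $c\|\mathcal F_K\|$ term, whereas you Taylor-expand each $\mathcal G(u_k,\mu_k)-\mathcal G(u_{j'},\mu_k)$ and bound $\|\mathcal G'(u^*,\mu_k)\|\le c$; the two routes produce the same estimate.

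One small bookkeeping slip: in your final display the smoothing-defect contribution should carry an extra factor $1/(1-c)$ (it arises when you convert $\|u_0-u^*\|^2$ to $\|u_0-u^*\|\,\|\mathcal F_0\|$ via \eqref{smoothing2}), so the bracketed constant equals $\varpi$ rather than $\varpi(1-c)$. With that correction the bound reads $\|\mathcal F_k\|\le(c+\varpi\|u_0-u^*\|)\|\mathcal F_0\|$, and the fifth entry in \eqref{eq-u0} closes the induction exactly as in the paper.
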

\begin{proof}
Since
$$\gamma\|F(u_0)\|^2\leq\gamma(1+c)^2\|u_0-u^*\|^2\leq\min\{\bar{\mu},\eta\},$$
then
$\mu_k\leq\min\{\bar{\mu},\eta\}$ by the updating method of $\mu_k$ in s-Anderson(m) for $k\geq0$. From \eqref{eq-eta}, we find that the first two relations in \eqref{eq-th2-2} hold.

Then, by Theorem \ref{thm1}-(i) and (iii), we have
\begin{equation}\label{eq-mu}
\mathcal{G}(u^*,\mu_k)=G(u^*)=u^*\;\mbox{and}\; \|\mathcal{G}({u},\mu_{k})
-\mathcal{G}({v},\mu_{k})\|\leq c\|u-v\|, \quad \forall k\geq0,\,u,v\in \mathcal{B}(\delta_1,u^*).
\end{equation}
We next prove the last two statements of \eqref{eq-th2-2} by induction, where we see that they are true for $k=0$ and we suppose both of them hold for $0\leq k\leq K$.

Owning to \eqref{eq-mu}, we have
$$\begin{aligned}
\left\|{u}_{K+1}-u^*\right\|=&\left\|\sum_{j=0}^{m_K}\alpha_j^K\mathcal{G}({u}_{K-m_K+j},\mu_{K-m_K+j})
-\sum_{j=0}^{m_K}\alpha_j^K\mathcal{G}({u}^*,\mu_{K-m_K+j})\right\|\\
\leq&M_{\alpha}c\max_j\|{u}_{K-m_K+j}-u^*\|
\leq\frac{M_{\alpha}c}{1-c}\max_j\|\mathcal{F}_{K-m_K+j}\|\\
\leq&\frac{M_{\alpha}c}{1-c}\|\mathcal{F}_{0}\|\leq \frac{M_{\alpha}c(1+c)}{1-c}\|u_{0}-u^*\|,
\end{aligned}$$
which gives $u_{K+1}\in\mathcal{B}(\delta_1,u^*)$ by the condition of $\delta_0$. Then, the third result in \eqref{eq-th2-2} holds for $k=K+1$.

Similarly,
$\sum_{j=0}^{m_K}\alpha_j^K{u}_{K-m_K+j}\in\mathcal{B}(\delta_1,u^*)\subseteq D$. Formulas \eqref{uk} and \eqref{eq-mu} imply
\begin{eqnarray}\label{eq14}
&&\|\mathcal{F}_{K+1}\|=
\|\mathcal{G}(u_{K+1},\mu_{K+1})-u_{K+1}\|\nonumber\\
&&\leq c\|u_{K+1}-\sum_{j=0}^{m_K}\alpha_j^K{u}_{K-m_K+j}\|
+\|{\mathcal{G}}(\sum_{j=0}^{m_K}\alpha_j^K{u}_{K-m_K+j},\mu_{K+1})-
\sum_{j=0}^{m_K}\alpha_j^K\mathcal{G}(u_{K-m_K+j},\mu_{K-m_K+j})\|\nonumber\\
&&\quad  \quad\leq  c\|\mathcal{F}_K\|
+A_K+B_K,
\end{eqnarray}
where
$$A_K=\left\|{\mathcal{G}}(\sum_{j=0}^{m_K}\alpha_j^K{u}_{K-m_K+j},\mu_{K+1})
-\sum_{j=0}^{m_K}\alpha_j^K
{\mathcal{G}}({u}_{K-m_K+j},\mu_{K+1})\right\|,$$
$$B_K=\left\|\sum_{j=0}^{m_K}\alpha_j^K
{\mathcal{G}}({u}_{K-m_K+j},\mu_{K+1})-\sum_{j=0}^{m_K}\alpha_j^K{\mathcal{G}}({u}_{K-m_K+j},\mu_{K-m_K+j})\right\|.$$
Then, by \eqref{eq-G}, we estimate $\|A_K\|$ by the same way as in \cite{ChenKelley2015,TothKelley2015} to get
\begin{equation}\label{eq9}
\begin{aligned}
\|A_{K}\|=&\left\|\Delta_{\sum_{j=0}^{m_K}\alpha_j^K{u}_{K-m_K+j}}-
\sum_{j=0}^{m_K}\alpha_j^K\Delta_{u_{K-m_K+j}}\right\|\\
\leq&\frac{a(M_{\alpha}^2+M_{\alpha})}{2}\max_{j}\|u_{K-m_K+j}-u^*\|^2\\
\leq&\frac{a(M_{\alpha}^2+M_{\alpha})}{2(1-c)^2}\max_{j}\|\mathcal{F}_{K-m_K+j}\|^2\\
\leq&\frac{a(M_{\alpha}^2+M_{\alpha})(1+c)\|u_{0}-u^*\|}{2(1-c)^2}\|\mathcal{F}_{0}\|.
\end{aligned}
\end{equation}
To evaluate $\|B_K\|$, by Theorem \ref{thm1}-(ii), \eqref{smoothing3}, \eqref{smoothing2} and \eqref{eq-mu}, we have
\begin{equation}\label{eq8}
\|B_K\|\leq M_{\alpha}\kappa(\mu_{K-m_K}+\mu_{K+1})\leq 2M_{\alpha}\kappa\mu_0=2M_{\alpha}\kappa\gamma\|F(u_0)\|^2\leq
\frac{2M_{\alpha}\kappa\gamma(1+c)^2\|u_0-u^*\|}{1-c}\|\mathcal{F}_0\|.
\end{equation}
Together \eqref{eq14}, \eqref{eq9}, \eqref{eq8} with the assumption of \eqref{eq-th2-2} for $k=K$, gives
$$
\|\mathcal{F}_{K+1}\|\leq (c+\varpi\|u_0-u^*\|)\|\mathcal{F}_0\|.
$$
Then the fourth relation in \eqref{eq-th2-2} holds for $k=K+1$ by $\delta_0$ satisfying $c+\varpi\delta_0\leq1$. We complete the proof for \eqref{eq-th2-2}.
\end{proof}
\begin{theorem}\label{th3}
Suppose Assumption \ref{ass-Q-H} and Assumption \ref{ass2-2} hold. If $u_0$ is sufficiently close to $u^*$, then the sequence $\{u^k\}$
generated by s-Anderson(m) in Algorithm \ref{algo-o1} converges to the solution of \eqref{eq1-0}
with the r-linear convergence rates of
\begin{equation}\label{sG1-1}
\limsup_{k\rightarrow\infty}\left(\frac{\|u_k-u^*\|}{\|u_0-u^*\|}\right)^{{1}/{k}}\leq {c}\quad\mbox{and}\quad
\limsup_{k\rightarrow\infty}\left(\frac{\|F(u_k)\|}{\|F(u_0)\|}\right)^{{1}/{k}}\leq {c}.
\end{equation}
\end{theorem}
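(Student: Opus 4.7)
My plan is to reduce s-Anderson($m$) to standard Anderson($m$) applied to a single smooth contraction, by exploiting the fact that the smoothing parameter $\mu_k$ must eventually stabilize. First I would choose $u_0$ with $\|u_0-u^*\|\leq\delta_0$ so that Lemma~\ref{lemma1} applies; this guarantees $\mu_k\in[\epsilon,\bar\mu]$, $u_k\in\mathcal{B}(\delta_1,u^*)$, and $\|\mathcal{F}_k\|\leq\|\mathcal{F}_0\|$ for every $k$. By Theorem~\ref{thm1}(i) and (iii), each $\mathcal{G}(\cdot,\mu_k)$ is then a contraction on $D$ with factor at most $c$ whose unique fixed point is $u^*$.

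The decisive step is \emph{stabilization}: I claim there is a finite index $k_0$ with $\mu_k\equiv\mu^*$ for all $k\geq k_0$, where $\mu^*\in[\epsilon,\bar\mu]$. The updating rule in Algorithm~\ref{algo-o1} forces $\mu_k$ to be nonincreasing, and every strict decrease either multiplies the current value by $\sigma_2\in(0,1)$ or clips it to $\epsilon$. Since the sequence is bounded below by $\epsilon>0$ while $\sigma_2^j\mu_0\to0$, only finitely many strict decreases can occur, which yields the claim.

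For all $k\geq k_0+m$, the $m_k+1$ parameters $\{\mu_{k-m_k+j}\}_{j=0}^{m_k}$ used in \eqref{eq7}--\eqref{uk} coincide and equal $\mu^*$, so these iterates are exactly those of standard Anderson($m$) applied to the Lipschitz continuously differentiable contraction $\mathcal{G}(\cdot,\mu^*)$, whose fixed point is $u^*$. Since $u_{k_0}\in\mathcal{B}(\delta_1,u^*)$, the closeness hypothesis underlying the smooth-case estimate \eqref{eq_mu2} is satisfied, and we obtain $\limsup_{k\to\infty}\|\mathcal{F}(u_k,\mu^*)\|^{1/k}\leq c$. The lower inequality in \eqref{smoothing2} converts this into the r-linear bound on $\|u_k-u^*\|$ asserted in \eqref{sG1-1}, and the upper inequality in \eqref{smoothing3} then converts the rate to $\|F(u_k)\|$; the finite offset $k_0+m$ disappears upon taking the $k$-th root.

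The main obstacle I anticipate is the stabilization argument together with justifying that one may legitimately invoke \eqref{eq_mu2} starting from the tail at $u_{k_0}$ rather than from $k=0$. Assumption~\ref{ass2-2} on the uniform $\ell_1$-bound of $\{\alpha_j^k\}$ and the containment $u_k\in\mathcal{B}(\delta_1,u^*)$ from Lemma~\ref{lemma1}, on which the Lipschitz constant $a$ of $\mathcal{G}'(\cdot,\mu)$ appearing in \eqref{eq-G} is uniformly controlled for $\mu\in[\epsilon,\bar\mu]$, are precisely the ingredients that let this transfer go through, mirroring the bookkeeping of \cite{ChenKelley2015,TothKelley2015}.
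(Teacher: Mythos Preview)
Your approach is essentially the same as the paper's: invoke Lemma~\ref{lemma1} to ensure $\mu_k\leq\bar\mu$ and $u_k\in\mathcal{B}(\delta_1,u^*)$, observe that $\{\mu_k\}$ must stabilize at some $\hat\mu\in[\epsilon,\bar\mu]$ so that the tail of the algorithm coincides with Anderson($m$) applied to the smooth contraction $\mathcal{G}(\cdot,\hat\mu)$, apply \eqref{eq_mu2} from that point, and then convert via \eqref{smoothing2} and \eqref{smoothing3}. If anything, your treatment is a bit more careful than the paper's in justifying the stabilization of $\mu_k$ and in noting the offset $k_0+m$ needed before all $m_k+1$ stored residuals use the common parameter $\hat\mu$.
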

\begin{proof}
Let $\|u_0-u^*\|\leq\delta_0$ with $\delta_0$ in \eqref{eq-u0}. Then, $\mu_0\leq\bar{\mu}$. By the updating method
of $\mu_k$, there exist
$K$ and $\hat{\mu}\in[\epsilon,\bar{\mu}]$ such that $\mu_k=\hat{\mu}$, for all $k\geq K$.

By Lemma \ref{lemma1}, as $\|u_0-u^*\|\leq\delta_0$, we have
$\|u_K-u^*\|\leq\delta_1$. Then, by \eqref{eq_mu2}, we have
$$
\limsup_{k\rightarrow\infty}\left(\frac{\|\mathcal{F}(u_k,\mu_k)\|}{\|\mathcal{F}(u_K,\mu_K)\|}
\right)^{{1}/{(k-K)}}\leq {c},$$
which implies
\begin{equation}\label{sG1}
\limsup_{k\rightarrow\infty}\left(\frac{\|\mathcal{F}(u_k,\mu_k)\|}{\|\mathcal{F}(u_0,\mu_0)\|}
\right)^{{1}/{k}}\leq {c}.
\end{equation}
From \eqref{smoothing2}, we obtain
$$\frac{\|\mathcal{F}(u_k,\mu_k)\|}{\|\mathcal{F}(u_0,\mu_0)\|}\geq\left(\frac{1-{c}}{1+{c}}\right)\frac{\|u_k-u^*\|}{\|u_0-u^*\|},$$
which combines with (\ref{sG1}) and $\limsup_{k\rightarrow\infty}\left(\frac{1-{c}}{1+{c}}\right)^{1/k}=1$ gives the first estimation in \eqref{sG1-1}.
In light of \eqref{smoothing3} and the first relation in (\ref{sG1-1}),
we further obtain the second result in \eqref{sG1-1}.
\end{proof}

From the updating method of $\{\mu_k\}$ in s-Anderson(m), it is a case that $\lim_{k\rightarrow\infty}\mu_k>\epsilon$, which means that there exists $K$ such that $\|F_k\|\leq\sigma_1\|F_{k-1}\|$, $\forall k\geq K$. Combining this with Theorem \ref{th3}, we note that if $\lim_{k\rightarrow\infty}\mu_k>\epsilon$, then s-Anderson(m) not only owns the r-linear convergence in
\eqref{sG1}, but also has the q-linear convergence on residual $\|F(u_k)\|$ with factor $\sigma_1$.
Moreover,
following the statements in Theorem \ref{th3}, even if we have no knowledge on $\bar{\mu}$ and $\eta$, the local convergence properties of s-Anderson(m) in Theorem \ref{th3} are always valid with any $\sigma_1,\sigma_2\in(0,1)$ by setting $\epsilon$ sufficiently small.
In particular, if $\mu_0$ is sufficiently small such that $\mu_k$ is unchanged in s-Anderson(m), then s-Anderson(m) is just Anderson(m) on $\mathcal{G}(u,\mu_0)$. A simple consideration is that the results in Theorem \ref{th3} also hold if we let $\mu_k:=\epsilon$ with $\epsilon$ being sufficiently small. Similar results in
Theorem \ref{th3} also hold for the EDIIS(m) with the same smoothing approach.
\begin{remark}\label{remark1}
According to Rademacher's theorem, a locally Lipschitz continuous function $G$ is differentiable almost everywhere.
If $\psi$ is a smoothing function of $\max\{t,0\}$,
Proposition \ref{p4_3} says that the contraction factor of $\mathcal{G}(\cdot,\mu)$ on $D$ can be sufficiently close to the contraction factor of
${G}$ as $\mu$ is sufficiently small. Theorem \ref{thm1} gives an upper bound of the contraction factor of $\mathcal{G}(\cdot,\mu)$ on $D$ with $\psi$ defined in \eqref{smoothing1}.
By the structure of $\psi$ in \eqref{smoothing1}, if $G$ is not
continuously differentiable at $u^*$,
which means that there is $i\in\{1,\ldots,l\}$ such that $Q_i(u^*)=\underline{w}_i$ or $\overline{w}_i$, then
the contraction factor of $\mathcal{G}(\cdot,\mu)$ with \eqref{smoothing1}
can be strictly smaller than the  contraction factor of $G$ around $u^*$ as $\mu$ is smaller than a threshold.

For example, if $G(u)=(\max\{u_1/2,0\},1-{u_2}/{4})^{\rm T}$,
the exact contraction factor of $G$ around its fixed point $u^*=(0,{4}/{5})^{\rm T}$ is ${1}/{2}$. Let $\mathcal{G}(u,{\mu})=(\psi(u_1/2,\mu),1-u_2/4)^{\rm T}$ with the definition of $\psi$ in \eqref{smoothing1}.
For any given ${\mu}\in[\epsilon,1]$,
we note that
\[\|\mathcal{G}'(u,{\mu})\|\leq\max\{|u_1|/(2\mu),1/4\},
\quad \forall u\in \mathcal{B}(\delta,u^*)\]
with $\delta\leq\epsilon/2$,
which implies that the contraction factor of $\mathcal{G}(\cdot,{\mu})$ is no larger than $1/4$ when ${\mu}\in[\epsilon,1]$.
These results combining the analysis in Theorem \ref{th3} show that as $u_0$ is sufficiently close to $u^*$, s-Anderson(m) is r-linearly convergent to the fixed point of $G$ with factor no larger than
$1/4$, which is strictly smaller than the contraction factor of $G$ around $u^*$. And the contraction factor of $\mathcal{G}(u,\mu)$ on $\mathcal{B}(\delta,u^*)$ is decreasing as $\mu$ is increasing in $[\epsilon,1]$.
\end{remark}
\section{Numerical applications and examples}\label{section4}
In this section, we illustrate our new convergence results of Anderson acceleration for nonsmooth fixed point problem \eqref{eq1-0} by three applications.
All the numerical experiments are performed in MATLAB 2016a on a Lenovo PC547 (3.00GHz, 2.00GB of RAM). When $m\geq2$, proceeding as in \cite{Fang,Walker}, we write the problem in \eqref{Anderson_alpha} by the following equivalent form
\begin{equation}\label{Anderson_theta}
\theta^k\in\arg\min_{\theta\in\mathbb{R}^{m_k}}\,\left\|F_k-\sum\nolimits_{j=0}^{m_k-1}\theta_j({F}_{k-m_k+j+1}-F_{k-m_k+j})\right\|
\end{equation}
and then
$$u_{k+1}=G(u_k)-\sum\nolimits_{j=0}^{m_k-1}\theta_j^k({G(u_{k-m_k+j+1})
-G(u_{k-m_k+j})})),$$
in terms of the original iterations $\alpha_j^k$ in \eqref{Anderson_alpha}, where
$\alpha_0^k=\theta_0^k$, $\alpha_j^k=\theta_j^k-\theta_{j-1}^k$ for $1\leq j\leq m_k-1$ and $\alpha_{m_k}^k=1-\theta_{m_k-1}^k$.
To solve \eqref{Anderson_theta}, we consult the method based on the pseudoinverse introduced in \cite{Fang}, and it has been shown that the deteriorating condition of the least-squares matrix does not necessarily interfere with convergence \cite{TothKelley2015}. This method is also used to find the $\alpha^k_j$ in s-Anderson(m) in Algorithm \ref{algo-o1}. For s-Anderson(m), we always set $\epsilon=10^{-10}$, $\gamma=1/n$
and $\sigma_1=\sigma_2=0.6$ for comparison.
And we stop Anderson(m) in Algorithm \ref{algo-o} and s-Anderson(m) in Algorithm \ref{algo-o1} when
\begin{equation}\label{stop}
\frac{\|F(u_k)\|}{\|F(u_0)\|}\leq 10^{-14} \quad {\rm or } \quad k\geq7000.
\end{equation}

It should be noticed that
the stopped criterion for s-Anderson(m) also uses the value of $\|F(u_k)\|/\|F(u_0)\|$
not the residual on smoothing approximation $\mathcal{F}(u,\mu)$.
   From these numerical results in Examples \ref{example1}-\ref{example3}, we have the following observations.
\begin{itemize}
\item [\rm (i)] Both Anderson(m) and s-Anderson(m) can be used to solve the considered problems, in which the contraction mappings $G$ are nonsmooth at the fixed points. Though the theoretical results of them are built up for local convergence, it is satisfactory that all the numerical experiments in this section are convergent with random initial points.
\item[\rm (ii)]  For both Anderson(m) and s-Anderson(m), as presented in the experiments, the best choice of $m$ is problem dependent.
\item[\rm (iii)] s-Anderson(m) performs better than Anderson(m) for most cases, and the local convergence of $\|F(u_k)\|/\|F(u_0)\|$ by s-Anderson(m) is also faster. Since the mapping $\mathcal{G}(u,\mu)$ used in s-Anderson(m) only has small difference with $G(u)$ in Anderson(m), the generated $u_k$ in the former iterations cannot bring obvious differences on $\|F_k\|/\|F_0\|$ when $\|F_k\|$ is relatively large. However, after certain iterations, $\|F_k\|$ is reduced significantly and the advantages of s-Anderson(m) appears clearly. So it is reasonable that s-Anderson(m) outperforms Anderson(m) when the accuracy is high.
\item[\rm (iv)]  The superiorities of s-Anderson(m) over Anderson(m) become more and more obvious as the number of elements in $\{i:Q_i(u^*)=\underline{w}_i\,\,\mbox{or}\,\,\overline{w}_i\}$ increases.
\end{itemize}
\subsection{Minimax optimization problem}
Constrained minimax optimization problem is often modeled by
\begin{equation}\label{eq1-ex1}
\min_{x\in\mathcal{X}}\max_{y\in\mathcal{Y}} f(x,y),
\end{equation}
where $f:\mathcal{X}\times\mathcal{Y}\rightarrow\mathbb{R}$ is a convex-concave function over closed, convex sets $\mathcal{X}\subseteq\mathbb{R}^{n_1}$ and $\mathcal{Y}\subseteq\mathbb{R}^{n_2}$.
Such models are widely used in game theory, machine learning and parallel computing. Due to the convexity and concavity of $f$ with respect to $x$ and $y$, respectively, $((x^*)^{\rm{T}},(y^*)^{\rm{T}})^{\rm{T}}$ is a saddle point of \eqref{eq1-ex1}, if and only if it satisfies
\begin{equation}\label{eq2-ex1}
\left\{\begin{aligned}
&x^*=P_{\mathcal{X}}(x^*-\alpha\nabla_xf(x^*,y^*))\\
&y^*=P_{\mathcal{Y}}(y^*+\beta\nabla_yf(x^*,y^*))
\end{aligned}
\right.
\end{equation}
with $\alpha,\beta>0$. Denote
$$u=\left(
      \begin{array}{c}
        x \\
        y \\
      \end{array}
    \right),\;\Lambda=\left(
                        \begin{array}{cc}
                          \alpha I_{n_1} & 0 \\
                          0 & \beta I_{n_2} \\
                        \end{array}
                      \right),\,
    L(u):=L(x,y)=\left(
                           \begin{array}{c}
                            \nabla_x f(x,y) \\
                             -\nabla_y f(x,y) \\
                           \end{array}
                         \right),\;
                         \Omega=\mathcal{X}\times\mathcal{Y}.
    $$
Then, \eqref{eq2-ex1} is expressed by
$u^*=P_{\Omega}(u^*-\Lambda L(u^*)),$
which is reduced to a fixed point problem of $G$ with
\begin{equation}\label{eq3-ex1}
G(u):=P_{\Omega}(u-\Lambda L(u)).
\end{equation}
The mapping in \eqref{eq3-ex1} can be formulated by \eqref{eq1-0} with
$Q(u)=u-\Lambda L(u)$ and $H(v)=v$.
\begin{assumption}\label{ass2}
The mapping $L$ is strongly monotone and Lipschitz continuous, i.e. there exist positive parameters $\tau_L$ and $c_L$ such that for all $u,\tilde{u}\in\Omega$, it holds
\begin{eqnarray*}
&(L(u)-L(\tilde{u}))^{\rm{T}}(u-\tilde{u})\geq\tau_L\|u-\tilde{u}\|^2,&\\
&\|L(u)-L(\tilde{u})\|\leq c_L\|u-\tilde{u}\|.
\end{eqnarray*}
\end{assumption}

For $u,\tilde{u}\in\Omega$, by the Lipschitz property of $P_{\Omega}$ and Assumption \ref{ass2}, when $\alpha=\beta$, we obtain
$$\begin{aligned}
&\|P_{\Omega}(u-\alpha L(u))-P_{\Omega}(\tilde{u}-\alpha L(\tilde{u}))\|^2\\
\leq&\|u-\alpha L(u)-\tilde{u}+\alpha L(\tilde{u})\|^2\\
=&\|u-\tilde{u}\|^2+\alpha^2\|L(u)-L(\tilde{u})\|^2-2\alpha(u-\tilde{u})^T(L(u)-L(\tilde{u}))\\
\leq&\left(1+\alpha^2c_L^2-2\alpha\tau_L\right)\|u-\tilde{u}\|^2.
\end{aligned}$$
It is easy to verify that $1+\alpha^2c_L^2-2\alpha\tau_L\in(0,1)$, if $\alpha\in\left(0,{2\tau_L}/{c_L^2}\right)$. Hence under Assumption \ref{ass2}, if $\alpha=\beta\in\left(0,{2\tau_L}/{c_L^2}\right)$, then
$G$ in \eqref{eq3-ex1} is a contractive mapping with factor $c=\sqrt{1+\alpha^2c_L^2-2\alpha\tau_L}$ and the conclusions in Theorem \ref{th3} hold for $G$ in \eqref{eq3-ex1}, which prompts us to find the fixed point of $G$ by using s-Anderson(m)
with the smoothing approximation of $G$
defined in \eqref{G-s}. To show the effectiveness of the corresponding theoretical results and the effect of s-Anderson(m) on solving problem
\eqref{eq1-ex1}, we conduct the numerical experiment on a special case of \eqref{eq1-ex1}, which comes from the two-payers Nash game problems.
\begin{example}\label{example1}
Consider
\begin{equation}\label{eq6-ex1}
\min_{x\in \mathbb{R}^{n_1}_+}\max_{y\in \mathbb{R}^{n_2}_+} f(x,y):=\frac{1}{2}x^{\rm{T}}Ax+x^{\rm{T}}By-\frac{1}{2}y^{\rm{T}}Cy+ a^{\rm T}x-b^{\rm{T}}y,
\end{equation}
where $A\in\mathbb{R}^{n_1\times n_1}$ and $C\in\mathbb{R}^{n_2\times n_2}$ are symmetric positive definite matrices, $B\in\mathbb{R}^{n_1\times n_2}$, $a\in\mathbb{R}^{n_1}$ and $b\in \mathbb{R}^{n_2}$
are random matrix and vectors.
Denote $\lambda_{\min}(A)$ and $\lambda_{\min}(C)$ the minimal eigenvalues of $A$ and $C$, respectively.
Let $\Omega=\mathbb{R}^{n_1+n_2}_+$ and $L(u)=M u+d$ with $u=(x^{\rm{T}},y^{\rm{T}})^{\rm{T}}$, $M=\left(
                                                              \begin{array}{cc}
                                                                A & B \\
                                                                -B^{\rm{T}} & C \\
                                                              \end{array}
                                                            \right)$ and $d=\left(
                                                                \begin{array}{c}
                                                                  a \\
                                                                  b \\
                                                                \end{array}
                                                              \right)$, which satisfies Assumption \ref{ass2} with
\begin{equation}\label{eq-ex2-1}
\tau_L=\min\{\lambda_{\min}(A),\lambda_{\min}(C)\}\quad\mbox{and}\quad c_L=\|M\|.
\end{equation}

Based on the above analysis, the solution of \eqref{eq6-ex1} can be transformed to the fixed point of \eqref{eq3-ex1}, and
when we choose
\begin{equation}\label{eq-ex2-2}
\alpha=\beta={\tau_L}/{c_L^2},
\end{equation}
$G$ in \eqref{eq3-ex1} is a
contractive mapping with factor $c=\|I_{n_1+n_2}-\alpha M\|$.
For given positive integers $n_1=1000$, $n_2=500$ and $s_1=0.3$, we generate matrices $A$, $C$ and $B$ as follows:
\begin{eqnarray*}&{\tt A1= 1 + a*rand(n_1,1);
U1 = orth(rand(n_1,n_1));
A = U1' * diag(A1) * U1;}&\\
&{\tt C1= 2 + b*rand(n_2,1);
U2 = orth(rand(n_2,n_2));
C = U2' * diag(C1)* U2;} &\\
&{\tt B=sprand(n_1,n_2,s_1);B=full(B)/norm(B);}&
\end{eqnarray*}
Then, we set $n=n_1+n_2$, $\alpha$ and $\beta$ be defined by \eqref{eq-ex2-2} with the parameters in
\eqref{eq-ex2-1}. It is clear that $G$ in \eqref{eq3-ex1} is nonsmooth at $u^*$ if there exists $i$ such that $(Mu^*+d)_i=0$ and $u_i^*=0$. So, for given $s_2=0.5$, we generate the fixed point $u^*$ ({\tt sol} in the code) with $n\times s_2$ elements of $0$ and vector $d\in\mathbb{R}^n$ such that the corresponding elements of $Mu^*+d$ are also $0$ by the following codes:
\begin{eqnarray*}
&{\tt index = randperm(n);index1 = index(1:s_2*n);
}&\\
&{\tt sol=0.1+0.9*rand(n,1);
sol(index1)=0;
M=[A\quad B; -B'\quad C];
d=-M*sol;}&
\end{eqnarray*}

Let $u_0={\rm zeros}(n,1)$. For different values of $a$ and $b$, which influence the contractive factor of $G$ in \eqref{eq3-ex1},
the number of iterations of Anderson(m) and s-Anderson(m) to find $u_k$ satisfying (\ref{stop}) are shown in Table \ref{table2}, where the values are the mean values of $50$ random experiments. From Table \ref{table2}, we see that though
the contractive factors of $G$ are all very close to $1$, both Anderson(m) and s-Anderson(m) work well, and s-Anderson(m) performs better for most cases. Throughout the whole table, the smallest iterations for all cases are presented by s-Anderson(m) with $m=3$ or $m=5$. Fig. \ref{fig2} plots the convergence behaviors of s-Anderson(1) and s-Anderson(3) with some different values of $\sigma_1=\sigma_2$, where the best is located at $\sigma_1=\sigma_2=0.6$. This is an interesting thing that we can let the value of $\epsilon$ be sufficiently small to guarantee the efficiency of s-Anderson(m), and control the values of $\sigma_1$ and $\sigma_2$ to improve its convergence behaviours. How to choose better parameters is an interesting topic for further study.
\begin{table}[htbp]
\centering{
\begin{tabular}{|c|c||c|c|c|c|c|c|} \hline
\multicolumn{2}{|c||}{Parameters}&\multicolumn{6}{|c|}{Anderson(m)/s-Anderson(m)}\\
\hline
$a,b$&$c$&$m=0$&$m=1$&$m=2$&$m=3$&$m=5$&$m=10$\\
\hline
$0,0$&0.835 & 150/\textbf{148} & 72/\textbf{65} & 60/\textbf{57} & 58/\textbf{48} & 62/\underline{\textbf{43}} & 78/\textbf{54}\\
$2,1$&0.893 & 230/\textbf{218} & \textbf{68}/85 & 71/71 & 71/\textbf{65} & 80/\underline{\textbf{58}} & 80/\textbf{66}\\
$1,1$&0.895 & 246/\textbf{236} & \textbf{74}/92 & 76/\textbf{73} & 77/\underline{\textbf{65}} & 84/\textbf{71} & 94/\textbf{80}\\
$1,2$&0.943 & 465/\textbf{446} & 147/\textbf{129} & 113/\textbf{105} & 117/\underline{\textbf{102}} & 125/\textbf{107} & 159/\textbf{126}\\
$3,1$&0.941& 407/\textbf{379} & 114/\textbf{104} & 105/\textbf{96} & 108/\underline{\textbf{93}} & 111/\textbf{94} & 122/\textbf{102}\\
$3,3$&0.961 &  609/\textbf{565} & 194/194 & 136/\textbf{123} & 144/\textbf{123} & 149/\underline{\textbf{122}} & 174/\textbf{139}\\
\hline
\end{tabular}}\caption{Numerical results of Anderson(m) and s-Anderson(m) for Example \ref{example1}}\label{table2}
\end{table}
\begin{figure}
\begin{center}
  \subfigure[s-Anderson(1)]{
    \label{fig2-1}
    \includegraphics[width=2.2in,height=2.0in]{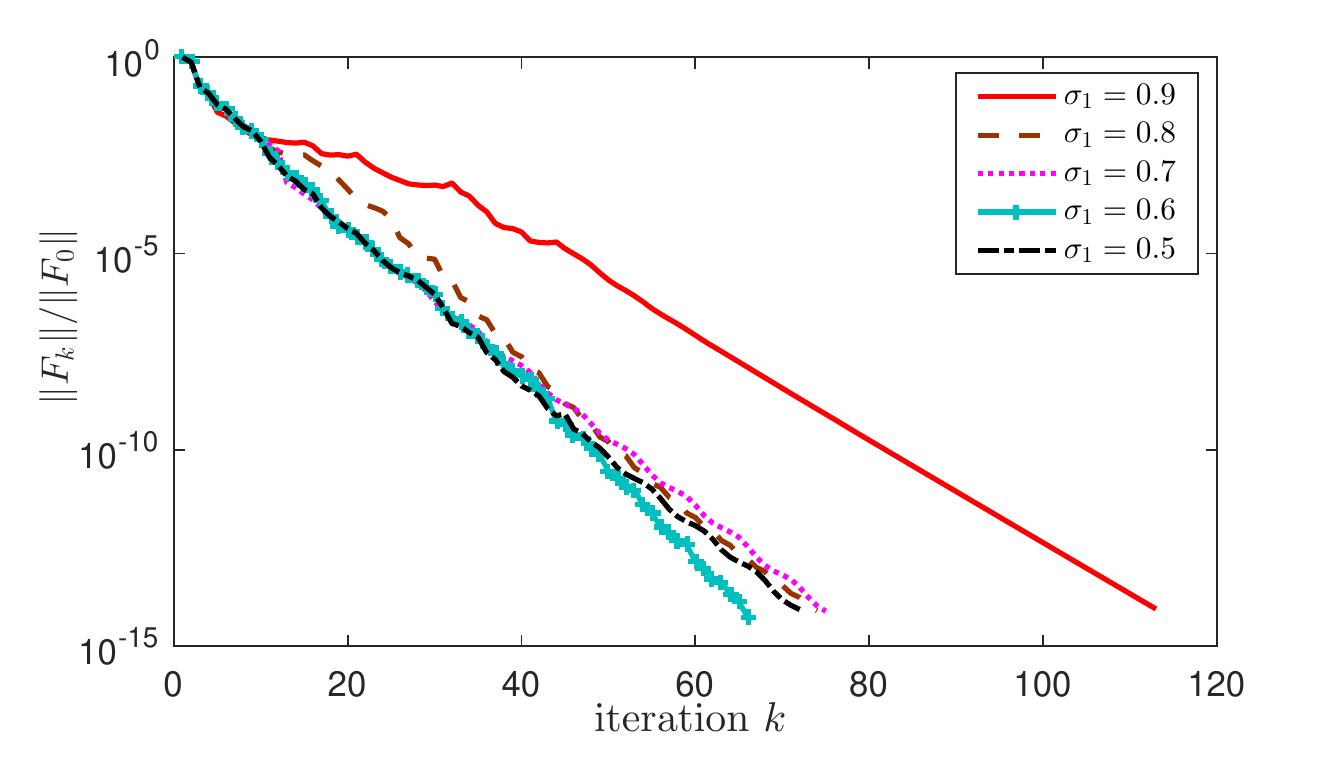}}
  \subfigure[s-Anderson(3)]{
    \label{fig2-2}
    \includegraphics[width=2.2in,height=2.0in]{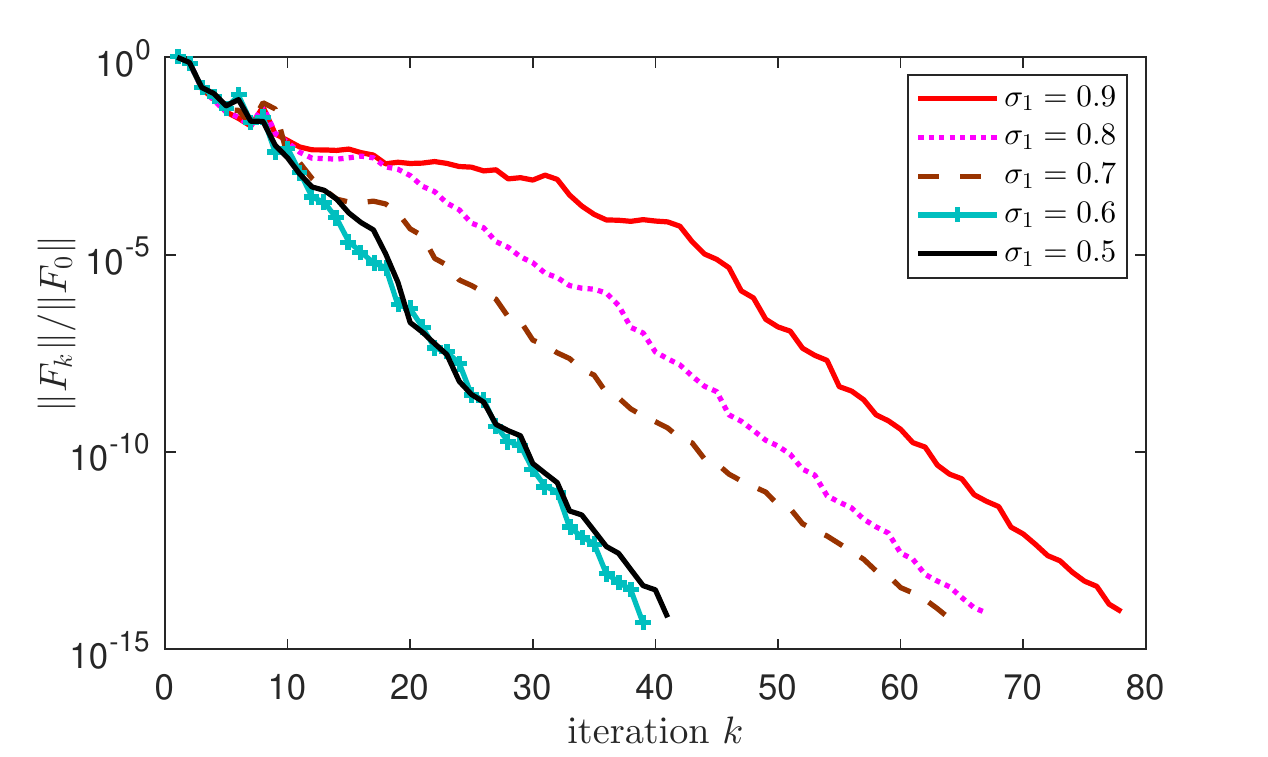}}
\caption{Convergence of $\|F_k\|/\|F_0\|$ by s-Anderson(1) and s-Anderson(3) with different values of $\sigma_1=\sigma_2$ for Example \ref{example1}} \label{fig2}
\end{center}
\end{figure}
\end{example}
\subsection{Complementarity problem}
Given a continuously differentiable function $f: \mathbb{R}^{n} \to \mathbb{R}^{n}$, the  complementarity problem is to find $v$ such that
$$v\ge 0, \quad f(v)\ge 0, \quad v^{\rm T}f(v)=0.$$
This problem is denoted as CP$(f)$, which is equivalent to
$v=\max\{v-f(v),0\}.$
Let $Q(v)=v-f(v)$. If $\|I- f'(v)\|\le c<1$, then $G(v)= \max\{Q(v),0\}$ is a contraction mapping with factor $c$.

If $f(v)=Mv+q$ with $M\in \mathbb{R}^{n\times n}$ and $q\in \mathbb{R}^{n}$, the CP$(f)$ is the linear complementarity problem, denoted as LCP$(q,M)$.
Suppose $M=(m_{ij})_{n\times n}$ is strictly diagonally dominate with positive diagonal elements in the following sense, $$\sum^n_{i=1,i\neq j}|m_{ij}|< m_{ii} \quad {\rm and} \quad \sum^n_{j=1,j\neq i}|m_{ij}|< m_{ii}.$$ Let
$\Lambda=$diag$(m_{ii})$. Then LCP$(q,M)$ is equivalent to
$$\Lambda v\ge 0, \quad M\Lambda^{-1}\Lambda v+q\ge 0, \quad (\Lambda v)^{\rm T}(M\Lambda^{-1}\Lambda v+q)=0$$
and can be solved via LCP$(q, M\Lambda^{-1})$. Moreover, from
\begin{eqnarray*}
\|I-M\Lambda^{-1}\| &\le& \sqrt{\|(\Lambda- M)\Lambda^{-1}\|_1\|(\Lambda- M)\Lambda^{-1}\|_\infty}\\
&=&\sqrt{\max \frac{1}{m_{ii}} \sum_{j=1,j\neq i}^n |m_{ij}|}\sqrt{ \max \frac{1}{m_{ii}} \sum_{i=1,i\neq j}^n |m_{ij}|}=:c<1,
\end{eqnarray*}
$G(u)= \max((I-M\Lambda^{-1})u-q,0)$ is a contraction mapping.
Let $Q(u)=(I-M\Lambda^{-1})u-q$. We define a smoothing approximation of $G$ by \eqref{G-s},
which is also a contraction mapping with factor $c$ and satisfies the conditions in Assumption \ref{ass-Q-H}. Thus, if $u^*$ is the fixed point of the above defined $G$, then $v^*=\Lambda^{-1}u^*$ is the solution of LCP$(q,M)$.
\begin{example}\label{example2}
Pricing American options in a partial differential equation framework with finite difference methods or finite element methods lead to a linear complementarity problem
\begin{equation}\label{pricing}
v-a\ge 0, \quad Mv-b\ge 0, \quad (v-a)^{\rm T}(Mv-b)=0,
\end{equation}
where $v$ is the value of an American option, $a$ is from a given payoff function,  $b$ is from an initial guess of the value and its changing rate, and $M$ is from differential operators  \cite{HJB}.

Let $u=v-a$ and $q=Ma-b$, then \eqref{pricing} is the standard form of LCP($q,M)$.
We set

$$M=\left(\begin{array}{rrrr}
2+\gamma_1 h^2 & -1+0.5h\tau_1 &  & \\
-1-0.5h\tau_2 & 2 +\gamma_2 h^2 & -1 +0.5h\tau_2&  \\
\ddots&\ddots&\ddots &\\
 &   &     2+\gamma_{n-1} h^2 &-1+0.5h\tau_{n-1}\\
   & & -1-0.5h\tau_n & 2+\gamma_n h^2
   \end{array}
   \right).$$
Here, $M$ is the matrix from the centered difference formulate for
$$-\frac{\partial^2V}{\partial x^2}(t,x) +\tau(t,x)\frac{\partial V}{\partial x}(t,x)+ \gamma(t,x)
V(t,x)$$
at a fixed time $t$,  where $h=1/(n+1)$ is the mesh size of discretization, and $\gamma(t,x)>0$ and $\tau(t,x)$ are given functions.
If $|\tau_{i}|=|\tau_{i+1}|<2(n+1)$, $i=1,2,\ldots,n-1$, the matrix $M$ is a strictly diagonal dominate matrix, and thus a P-matrix.
Then, the LCP($q,M)$ has a unique solution $u^*$ for any $q \in \mathbb{R}^n$, which is also the fixed point of the nonsmooth fixed point problem
\begin{equation}\label{eq-ex2}
u=G(u)=\max\{(I-\eta M)u-\eta q,0\},
\end{equation}
with $\eta=\frac{1}{2+\gamma h^2}$. Here function
$G$ in \eqref{eq-ex2} is a contraction mapping with the contraction factor $c=2\eta$ and $G_i$ is not differentiable at the solution $u^*$ for
\begin{equation}\label{eqN}
i\in {\mathcal{N}}:=\{i \, : \, ( (I-\eta M)u^*-\eta q)_i=0\}.
\end{equation}

Throughout this example, we choose $u_0=0.5*{\rm ones}(n,1)$ and set $\gamma(t,x)\equiv 10^3$, $\tau(t,x)\equiv-1$.
For given $n$ and $\Theta\in (0,1)$ ({\tt theta}), we randomly generate the solution $u^*$ ({\tt sol}) and corresponding $q$ as follows
\begin{equation}\label{eq11}{\tt
sol= \max\{{\rm rand}(n,1)- theta, 0\}; \quad  q=-M*sol;}
\end{equation}
By the setting of this problem, there are around $\Theta\times n$ components
in $\mathcal{N}$ defined by \eqref{eqN}.

First, we compare the performance of Anderson(m) and s-Anderson(m) with different values of $m$.
Set $\Theta=0.4$, and $n=200$, $300$ in \eqref{eq11}. The convergence of $\|F_k\|/\|F_0\|$ for Anderson(m) and s-Anderson(m) with $m=0,1,2,3,10$ are plotted in
Fig. \ref{fig5}, from which we can see that s-Anderson(m) is faster than Anderson(m) always and s-Anderson(10) is the best. In \cite{Pollock2}, the following  dynamically updating of depth $m_k$ is introduced and used,
\begin{equation}\label{depth}
m_k={\rm median}([m_1;\tilde{m}_k;m_2])\quad\mbox{with}\quad
\tilde{m}_k={\rm ceil}(-\log_{10}\|F_k\|),
\end{equation}
where $m_1$ and $m_2$ are positive integers to control the lower and upper bounds of $m$.
In particular, if $m_1=m_2$, then the corresponding algorithms are just Anderson(m) and s-Anderson(m) with $m=m_1=m_2$. Fig. \ref{fig6} shows the number of iterations of Anderson(m) and s-Anderson(m) to satisfy the stop criterion in \eqref{stop} using dynamic depth selection \eqref{depth} with $m=m_1=m_2$ and $m_1\neq m_2$, in which the best result is located at $m=m_1=m_2=8$ by s-Anderson(m). From Fig. \ref{fig6}, we find that the number of iterations is not monotone decreasing as $m$ is increasing. Whether the dynamic depth selection approaches can improve the convergence of Anderson acceleration methods is an interesting topic for further research.
\begin{figure}
\begin{center}
  \subfigure[$n=200$]{
    \label{fig3-1}
    \includegraphics[width=2.2in,height=1.8in]{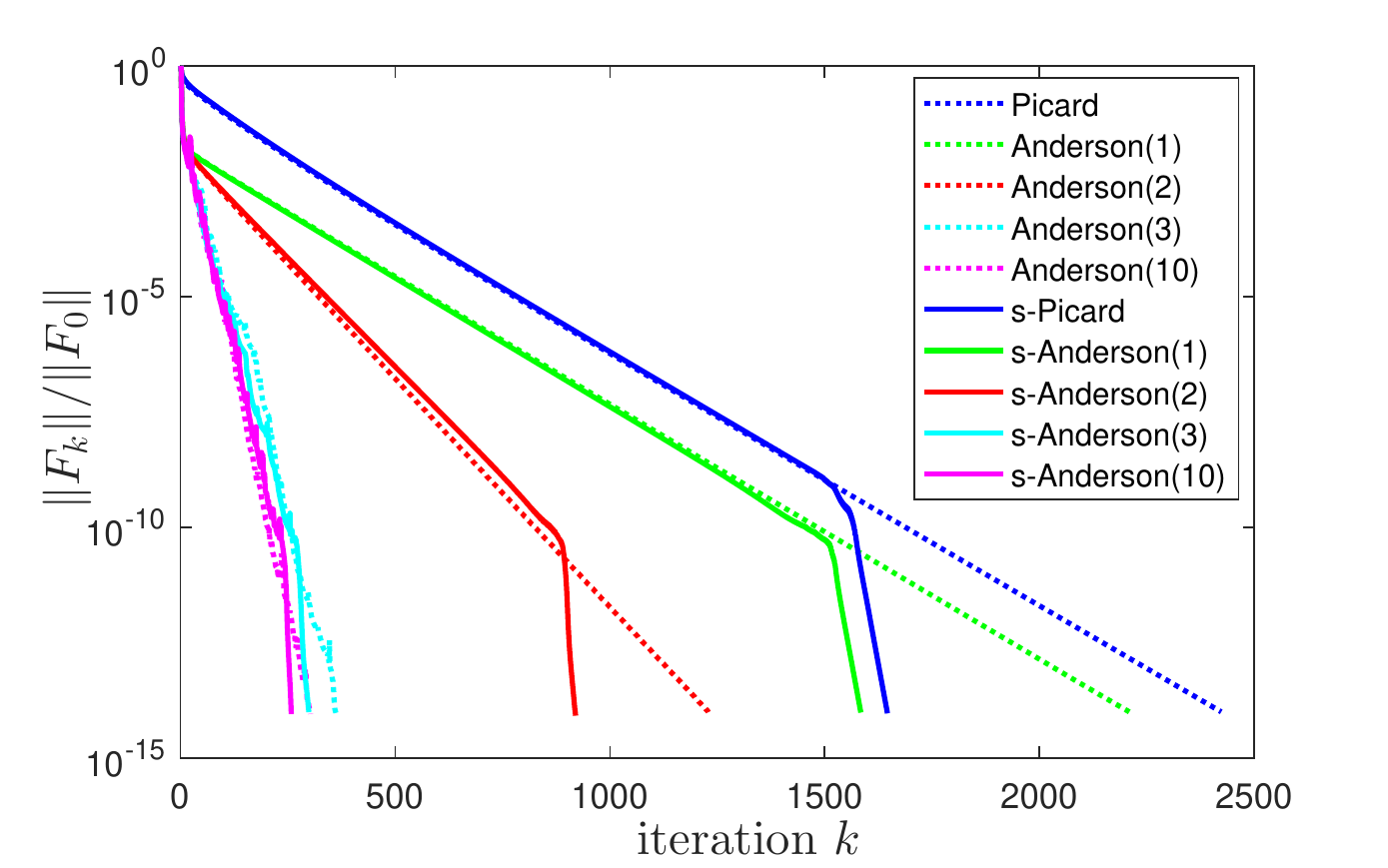}}
  \subfigure[$n=300$]{
    \label{fig3-2}
    \includegraphics[width=2.2in,height=1.8in]{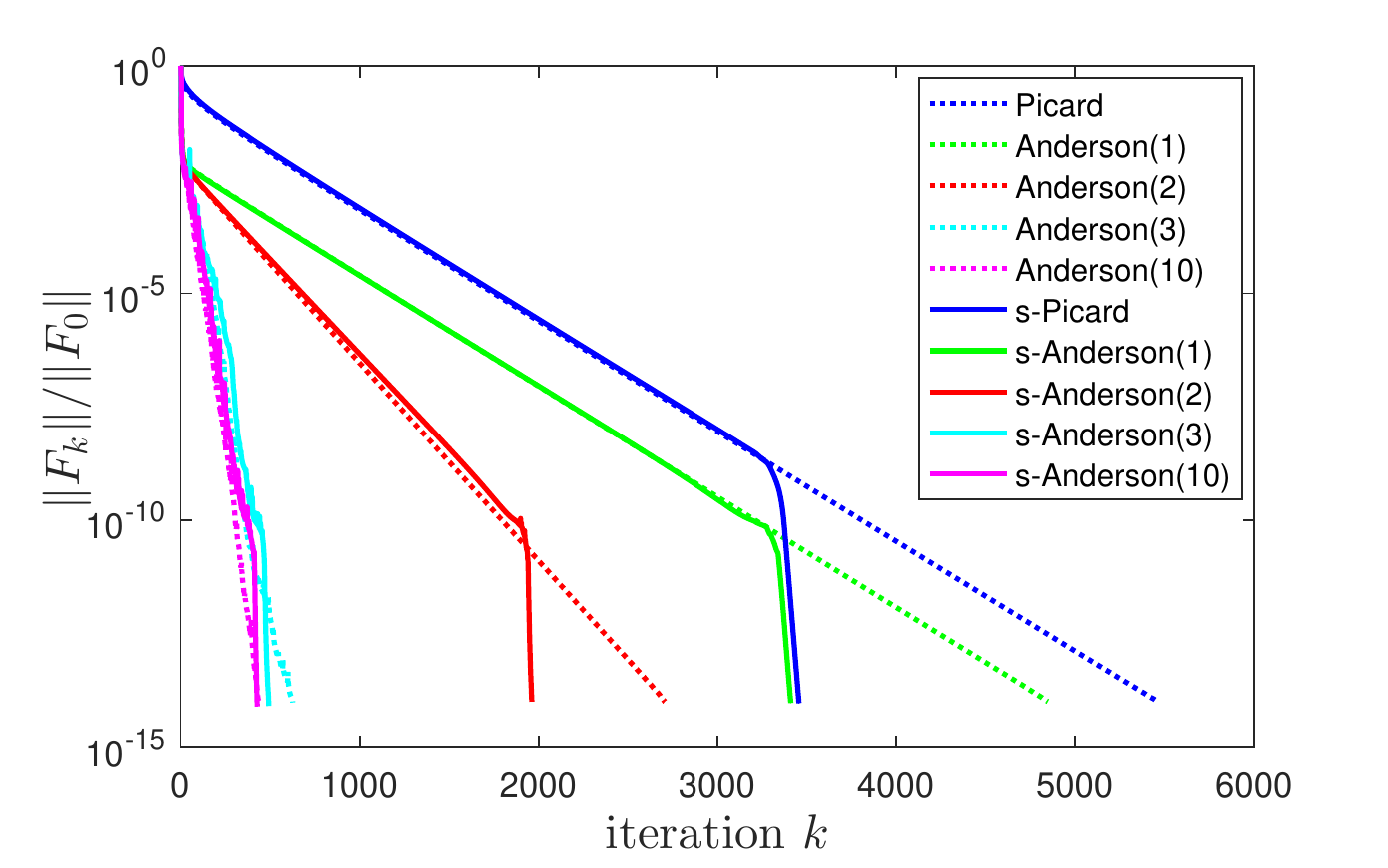}}
\caption{Convergence of $\|F_k\|/\|F_0\|$ by Anderson(m) and s-Anderson(m) for Example \ref{example2} with $n=200$ and $n=300$} \label{fig5}
\end{center}
\end{figure}
\begin{figure}
\begin{center}
\includegraphics[width=5.2in,height=2.0in]{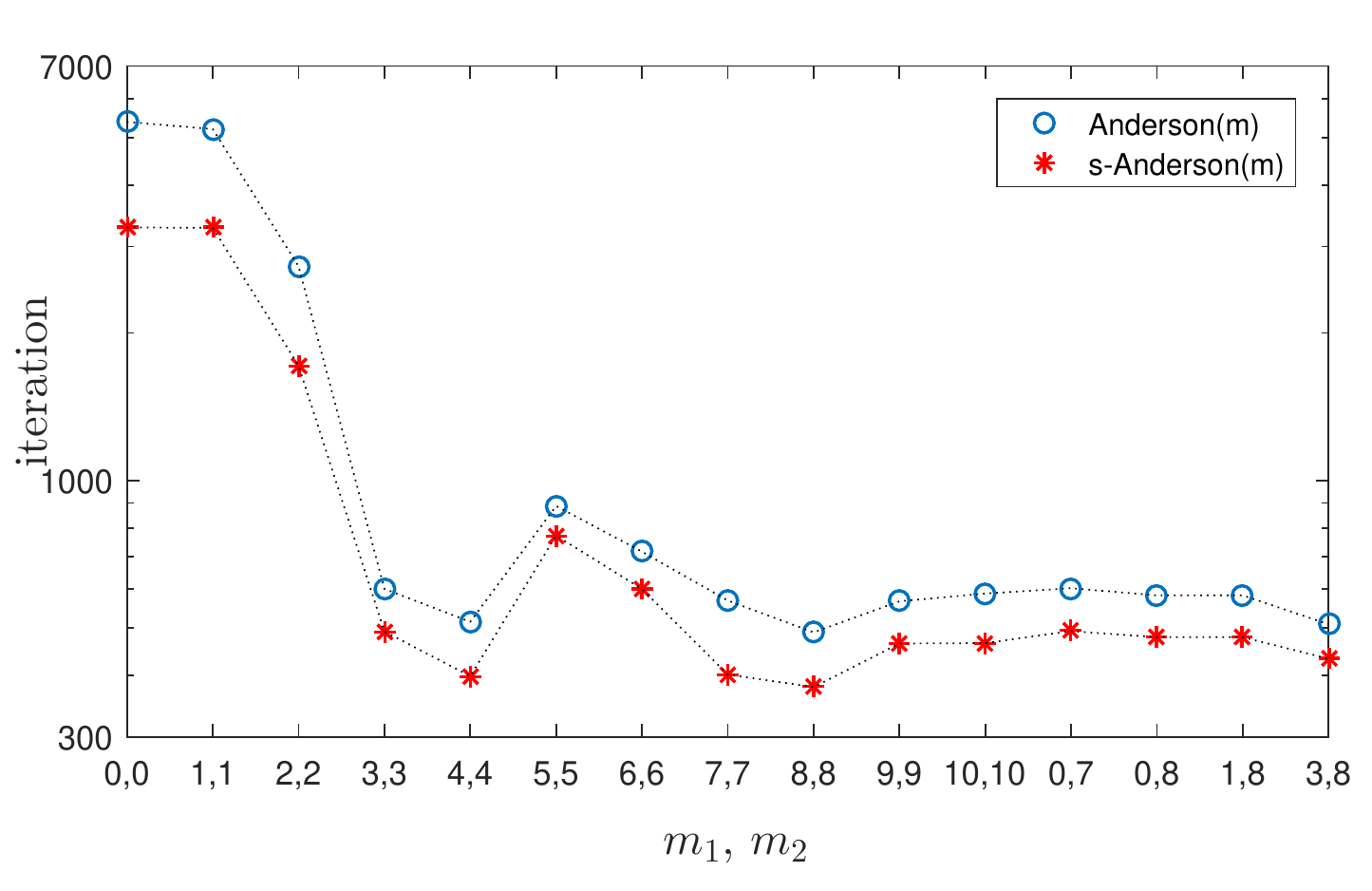}
\caption{Performance of Anderson(m) and s-Anderson(m) using dynamic depth selection \eqref{depth} with $m=m_1=m_2$ and $m_1\neq m_2$ for Example \ref{example2}} \label{fig6}
\end{center}
\end{figure}

Next, we test the performance of Anderson(m) and s-Anderson(m) for different values of $\Theta$,
since its value controls the number of dimensions, on which $G$ is nonsmooth at $u^*$. Let $n=200$.
For $\Theta=0.2$, $0.4$, $0.6$ and $0.8$, we plot the convergence of $\|F_k\|/\|F_0\|$ by Anderson(m) and s-Anderson(m) with $m=1,10$ in Fig. \ref{fig4}.
The displayed results in Fig. \ref{fig4} show that s-Anderson(m) is faster than Anderson(m) for all these cases. In particular, as $\Theta$ is larger, the superiority on the local convergence rate of s-Anderson(m) compared with Anderson(m) is more obvious, which
corresponds to the observation (iv) given at the beginning of this section.
\begin{figure}
\begin{center}
  \subfigure[]{
    \label{fig4-1}
    \includegraphics[width=2.3in,height=1.8in]{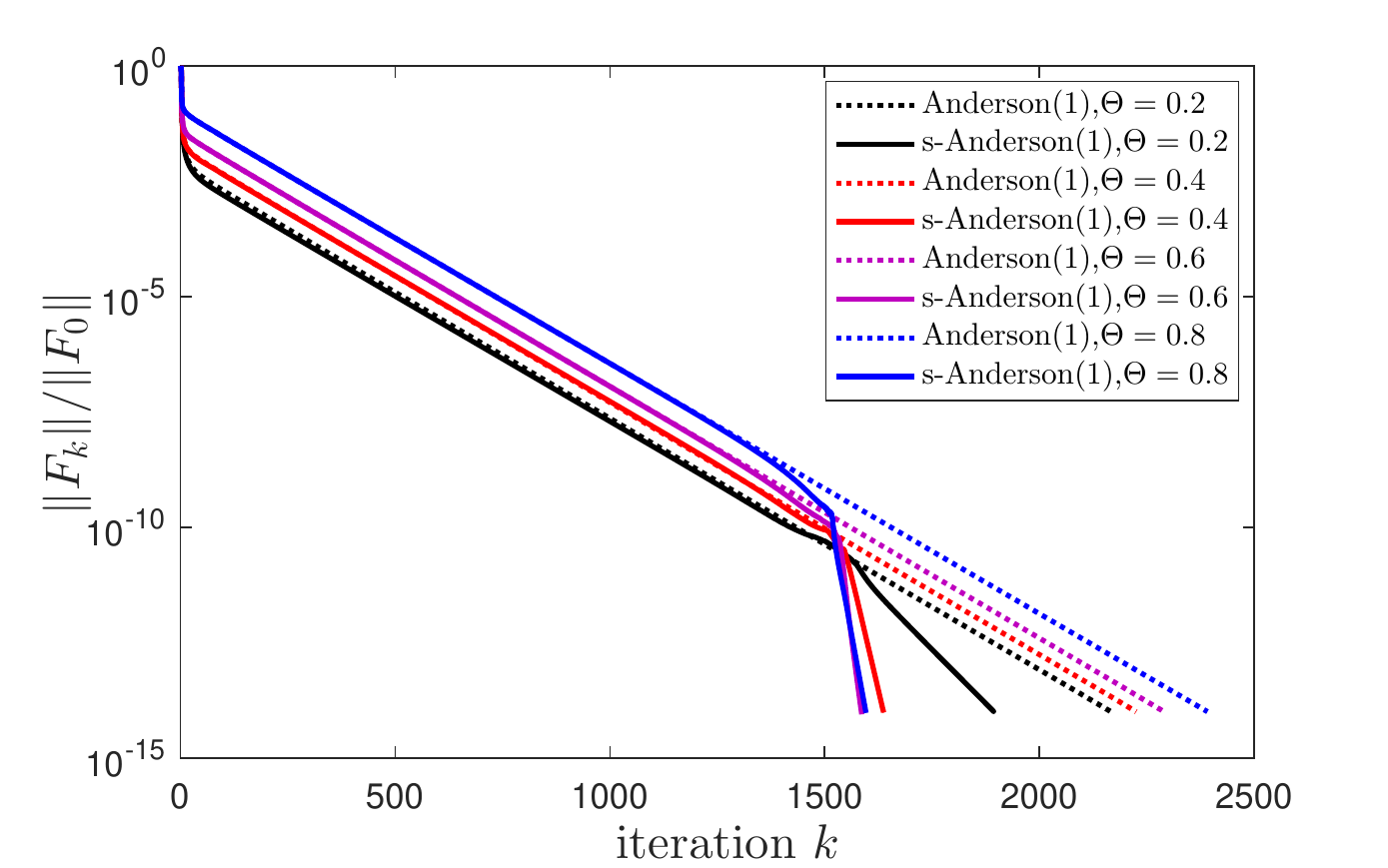}}
  \subfigure[]{
    \label{fig4-2}
    \includegraphics[width=2.3in,height=1.8in]{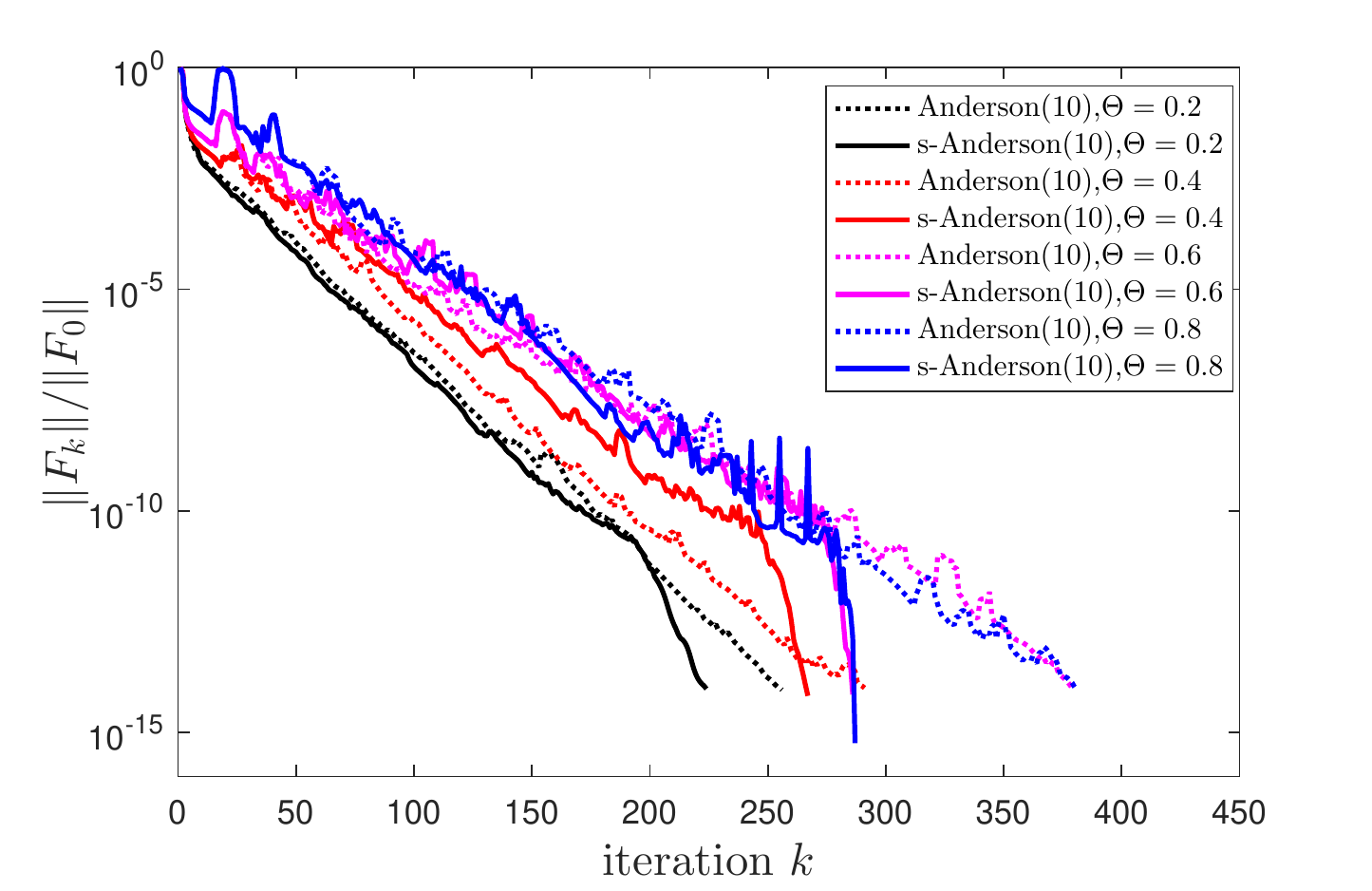}}
\caption{Performance of Anderson(m) and s-Anderson(m) with $m=1,10$ for Example \ref{example2} with four different values of $\Theta$} \label{fig4}
\end{center}
\end{figure}
\end{example}
\subsection{Nonsmooth Dirichlet problem}
Consider the Dirichlet problem \cite{chennashed}
\begin{equation}\label{Dirichlet}
\left\{ \begin{array}{rl}
-\Delta v +\beta v&= \lambda \max\{v-\varphi(x,y), 0\} +\psi(x,y)  \quad {\rm in} \quad \Xi\\
v&=f(x,y)    \quad \quad{\rm on} \quad \bar{\Xi},
\end{array}\right.
\end{equation}
where $\Xi=(0,1)\times(0,1)$, $\bar{\Xi}$ denotes the boundary of $\Xi$,
$\varphi, \psi\in C(\bar{\Xi})\cap C^1(\Xi)$, $f \in C(\bar{\Xi})$, $\beta>0$ and $\lambda\in\mathbb{R}$.
Using the five point centered finite difference method for the Dirichlet problem (\ref{Dirichlet})
with a mesh size $h$ at grid $(x_i,y_j)$ gives
\begin{equation}\label{D1}
 -v_{i,j+1}-v_{i,j-1}+4v_{i,j}-v_{i+1,j}-v_{i-1,j}+\beta h^2 v_{i,j}=\lambda h^2\max\{v_{i,j}-\varphi_{i,j},0\} + h^2 \psi_{i,j}.
 \end{equation}
By transforming $(v_{i,j})$ to a vector $u$, \eqref{D1} can be illustrated by the following system
\begin{equation}\label{Dirichlet0}
(-L+4I-U+\beta h^2I)u=\lambda h^2\max\{u+p,0\} + q,
\end{equation}
where $L$ and $U$ are lower and upper diagonal matrices with nonnegative elements, $h=1/(\sqrt{n}+1)$, $p, q \in \mathbb{R}^n$ are the corresponding vectors transformed by $\varphi_{i,j}$ and $h^2 \psi_{i,j}$. Then, \eqref{Dirichlet0} is equivalent to the following fixed point problem
\begin{equation}\label{DirichletN}
u=G(u):=\frac{1}{4+\beta h^2}(L+U)u+\frac{h^2\lambda}{4+\beta h^2}\max\{u +p, 0\} +\frac{1}{4+\beta h^2} q.
\end{equation}
When $\beta>|\lambda|$, from
$$\|G(u)- G(v)\|\le\frac{4+|\lambda| h^2}{4+\beta h^2}\|u-v\|,$$
the function $G$ in \eqref{DirichletN} is a contraction mapping
with factor
$c:=\frac{4+|\lambda| h^2}{4+\beta h^2}$.

\begin{example}\label{example3}
We consider the nonsmooth fixed point problem (\ref{DirichletN}) from the finite difference discretization of
the nonsmooth Dirichlet problem (\ref{Dirichlet}).
Let the solution of problem (\ref{Dirichlet}) be $v(x,y)=\max(-\sin(x\pi)\sin(y\pi)+0.5,0)$, and $u^*$ present the values of $v(x,y)$ at the mesh points for given mesh size $h=1/(\sqrt{n}+1)$. We randomly generate $p=-0.4*{\rm rand(n,1)}$ and set $q=(4+\beta h^2)u^*-(L+U)u^*- \lambda h^2\max\{u^*+p,0\}$ with $\lambda=1$ and $\beta=2$. Notice that the contraction factor of $G$ is very close to $1$ at this situation.

When $n=64\times 64$, the original function is plotted in Fig. \ref{fig6-2-3}, in which we can see that it is nonsmooth. Choosing the initial point $u_0=0.5*{\rm rand}(n,1)$,
the convergence performance of $\|F_k\|/\|F_0\|$ for s-Anderson(m) are plotted in Fig. \ref{fig6-1-3}.  For different values of $n$, the convergence rates at the stopped point, defined by $(\|F_k\|/\|F_0\|)^{1/k}$, are listed in Table \ref{table1}. This example shows that s-Anderson(m) can effectively solve this problem with a contraction factor very close to $1$, and s-Anderson(m) is faster as $m$ increases from $0$ to $20$.
\begin{figure}
\begin{center}
  \subfigure[$v(x,y)=\max(-\sin(x\pi)\sin(y\pi)+0.5,0)$]{
    \label{fig6-2-3}
    \includegraphics[width=2.3in,height=1.8in]{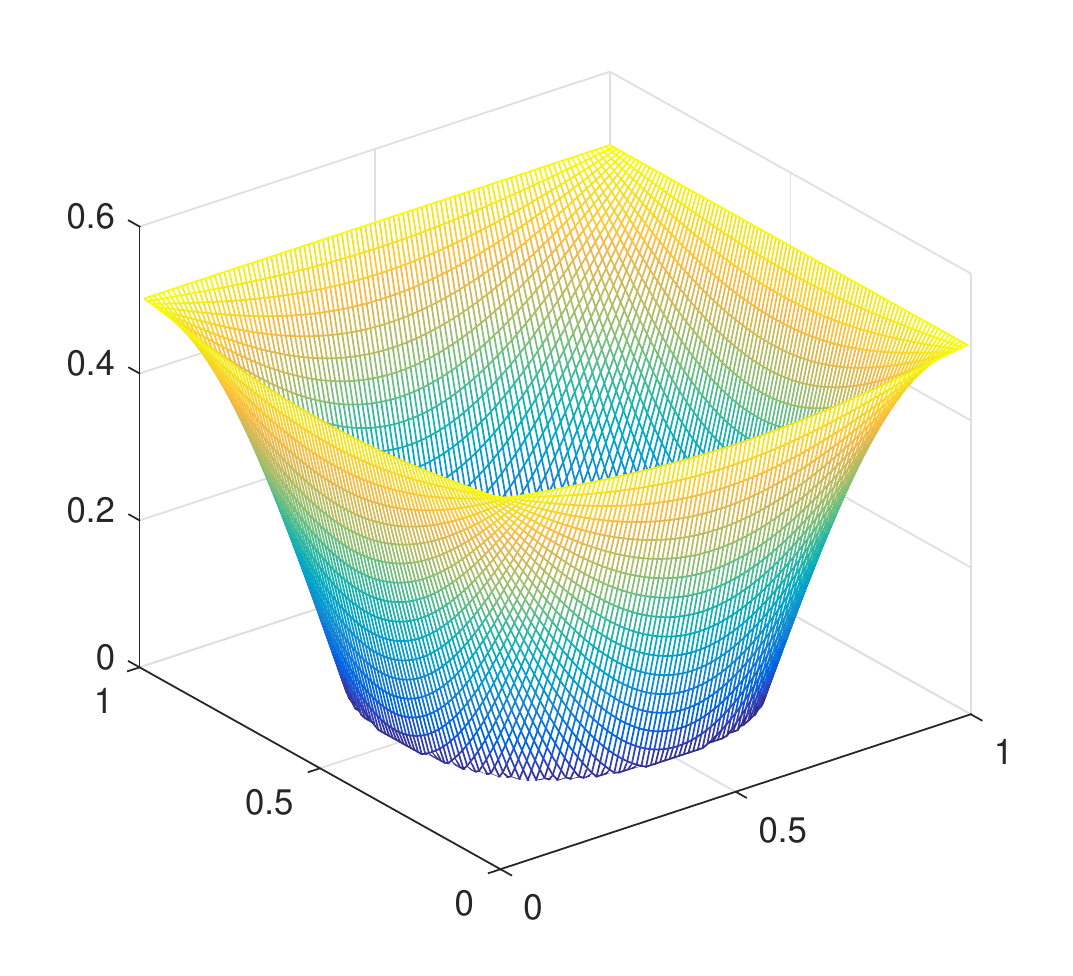}}
\subfigure[Convergence rate]{
    \label{fig6-1-3}
  \includegraphics[width=2.3in,height=1.8in]{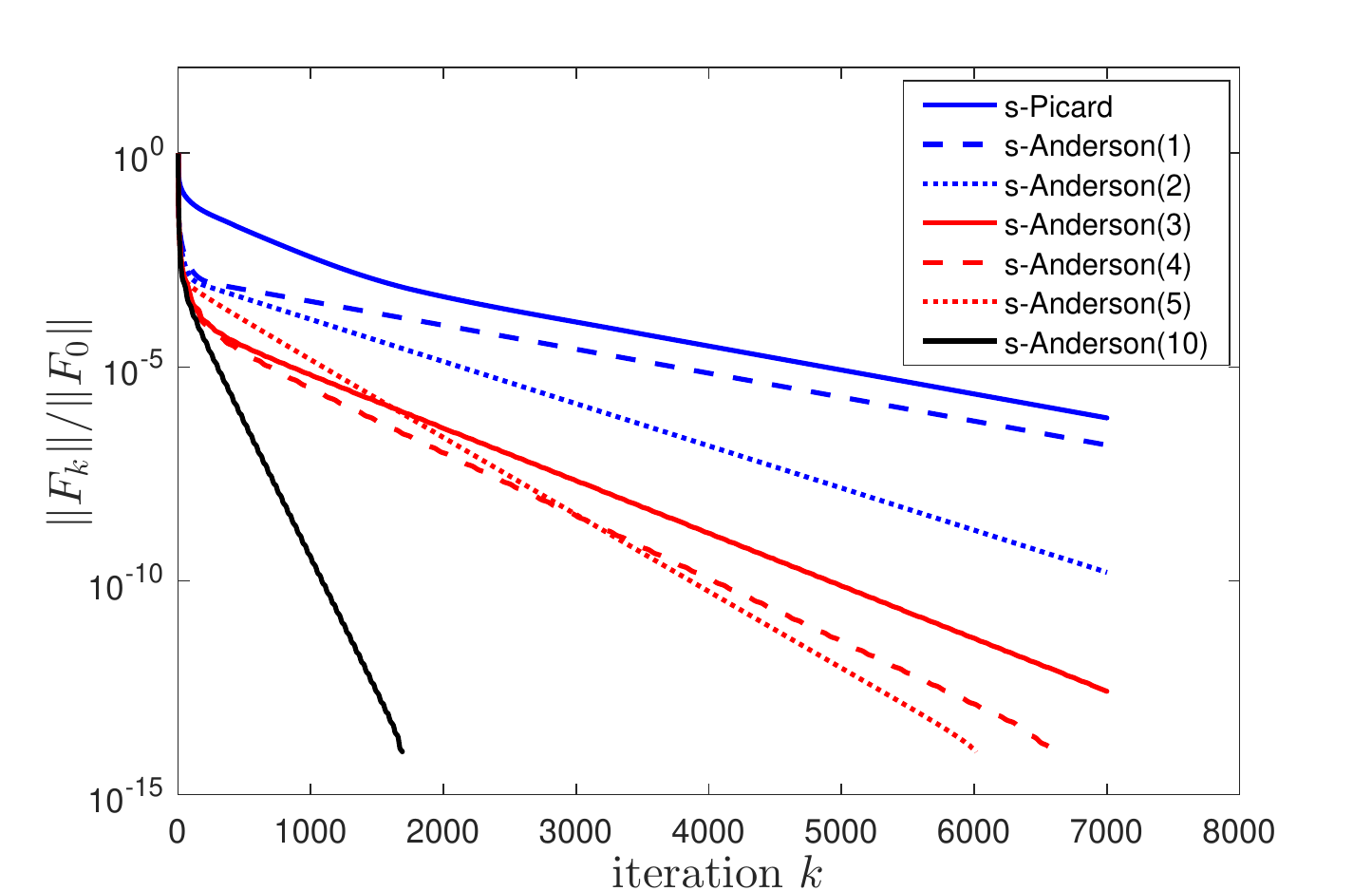}}
\caption{Solution and convergence performance of s-Anderson(m) for Example \ref{example3} with $n=64\times 64$} \label{fig6-3}
\end{center}
\end{figure}
\begin{table}[htbp]
\centering{
{\footnotesize\begin{tabular}{|c|c||c|c|c|c|c|c|c|c|} \hline
$\sqrt{n}$&$1-c$&$m=0$&$m=1$&$m=2$&$m=3$&$m=5$&$m=10$&$m=20$\\
\hline
16&8.635e-4 & 9.793e-01 & 9.789e-01 & 9.632e-01 & 9.519e-01   & 9.191e-01 & 8.501e-01 & 7.750e-01 \\
32&2.294e-4 & 9.944e-01 & 9.940e-01 & 9.897e-01 & 9.860e-01  & 9.784e-01 & 9.480e-01 & 9.078e-01  \\
64&5.916e-5 & 9.978e-01 & 9.978e-01 & 9.968e-01 & 9.959e-01  & 9.946e-01 & 9.832e-01 & 9.758e-01\\
128&1.502e-5 & 9.991e-01 & 9.985e-01 & 9.983e-01 & 9.976e-01 & 9.976e-01 & 9.964e-01 & 9.930e-01 \\
\hline
\end{tabular}}}\caption{Values of $(\|F_k\|/\|F_0\|)^{1/k}$ by s-Anderson(m) for Example \ref{example3}}
\label{table1}
\end{table}
\end{example}
\section{Conclusions}

Anderson acceleration does not use derivatives in its iterations, but it is difficult to prove its convergence without continuous differentiability. Most existing convergence results of Anderson acceleration are established under the assumption that the involved function
is continuously  differentiable \cite{ChenKelley2015,Evans,Pollock,TothKelley2015,Walker}.
For a special class of  nonsmooth functions that is a sum of a smooth term and a nonsmooth term with a small Lipschitz constant, convergence of Anderson acceleration is proved in a recent paper \cite{BCK}. In this paper, we give new convergence results of  Anderson acceleration for
nonsmooth fixed point problem (\ref{eq1-0}), which has a composite max function in $G$.
Theorem \ref{theorem2} shows that Anderson(1) is q-linear convergent with a
q-factor $\hat{c}\in (\frac{2c-c^2}{1-c}, 1)$, which can be strictly smaller than $\frac{3c-c^2}{1-c}$ given
in \cite{BCK,TothKelley2015}.
Moreover, we construct a smoothing approximation ${\cal G}(\cdot,\mu)$ for the nonsmooth function $G$ in (\ref{G-s}), where ${\cal G}(\cdot,\mu)$ is also a contraction mapping and has
the same fixed point as $G$. Then, we propose an Anderson accelerated algorithm with ${\cal G}(u,\mu)$ and prove its local r-linear convergence with factor $c$ for nonsmooth fixed point problem (\ref{eq1-0}),
which is same as the convergence rate
of Anderson acceleration for the continuously differentiable case.

\vspace{0.05in}
%\section{Acknowledgment}
{\bf Acknowledgment} We would like to thank Prof. Tim Kelley for introducing us the research topics on Anderson acceleration for nonsmooth fixed problems. We also would like to thank Prof Angela Kunoth, the editors and the two referees for their helpful comments.
\bibliographystyle{siamplain}

\end{document}